\theoremstyle{plain}
\newtheorem{thm}{Theorem}
  \theoremstyle{definition}
  \theoremstyle{remark}
  \newtheorem{rem}[thm]{Remark}
  \theoremstyle{plain}
  \newtheorem{prop}[thm]{Proposition}
  \theoremstyle{plain}
  \newtheorem{lem}[thm]{Lemma}
  \theoremstyle{plain}
 \theoremstyle{definition}
  \theoremstyle{remark}
  \newtheorem*{rem*}{Remark}
  \theoremstyle{definition}
\newtheorem*{question*}{\it{QUESTION}}
\theoremstyle{plain}
\newcommand{\N}{\mathbb{N}}
\newcommand{\R}{{\mathbb{R}}}
\newcommand{\C}{{\mathbb{C}}}
\newcommand{\D}{{\mathbb{D}}}
\newcommand{\dd}{{\rm d}}
\newcommand{\ii}{{\rm i}}
\renewcommand{\Im}{\mathop\mathrm{Im}\nolimits}
\newcommand{\sn}{\mathop\mathrm{sn}\nolimits}
\newcommand{\cn}{\mathop\mathrm{cn}\nolimits}
\newcommand{\dn}{\mathop\mathrm{dn}\nolimits}
\newcommand{\SC}{\mathop\mathrm{sc}\nolimits}
\newcommand{\NC}{\mathop\mathrm{nc}\nolimits}
\newcommand{\DC}{\mathop\mathrm{dc}\nolimits}
\newcommand{\sd}{\mathop\mathrm{sd}\nolimits}
\newcommand{\zn}{\mathop\mathrm{zn}\nolimits}
\begin{document}

\graphicspath{{C:/Data/00Synchronized/Figures/2015/FS_Sn/}}

\title{On extremal properties of Jacobian elliptic functions with complex modulus}

\author{Petr Siegl}
\address[Petr Siegl]{
	Mathematisches Institut, 
	Universit\"{a}t Bern,
	Alpeneggstrasse 22,
	3012 Bern, Switzerland
	\& On leave from Nuclear Physics Institute ASCR, 25068 \v Re\v z, Czech Republic}
\email{petr.siegl@math.unibe.ch}

\author{Franti\v sek \v Stampach}
\address[Franti\v sek \v Stampach]{
	Mathematisches Institut, 
	Universit\"{a}t Bern,
	Alpeneggstrasse 22,
	3012 Bern, Switzerland
	\& Department of Mathematics, 
	Stockholm University,
	Kr\"{a}ftriket 5,
	SE - 106 91 Stockholm, Sweden
	}
\email{stampfra@fjfi.cvut.cz}

\subjclass[2010]{33E05}

\keywords{Jacobian elliptic functions, complex modulus, extrema of elliptic functions}

\date{December 16, 2015}

\begin{abstract}
A thorough analysis of values of the function $m\mapsto\sn(K(m)u\mid m)$ for complex parameter $m$ and $u\in (0,1)$ is given.
First, it is proved that the absolute value of this function never exceeds 1 if $m$ does not belong to the region in $\C$ determined by inequalities
$|z-1|<1$ and $|z|>1$. The global maximum of the function under investigation is shown to be always located in this
region. More precisely, it is proved that, if $u\leq1/2$, then the global maxim is located at $m=1$ with the value equal to $1$. While if $u>1/2$,
then the global maximum is located in the interval $(1,2)$ and its value exceeds $1$. In addition, more subtle extremal properties are studied numerically.
Finally, applications in a Laplace-type integral and spectral analysis of some complex Jacobi matrices are presented.
\end{abstract}

\maketitle


\section{Introduction and statement of the main result}

Jacobian elliptic functions $\sn(z\mid m)$, $\cn(z\mid m)$, $\dn(z\mid m)$ have been studied in depth, in particular, 
as functions of complex argument $z$; classical references are treatises \cite{akhiezer90, chandrasekharan85, lawden89, 
whittaker27} or handbooks \cite{abramowitz64, byrd71}.
However, the vast majority of investigations have been made when the parameter $m$ (or modulus $k$; see Section 
\ref{subsec:not} for details) is real and restricted to the interval $(0,1)$, which typically suffices in applications.
The articles of Walker \cite{walker_rslps03, walker_cmft09} and Schiefermayr \cite{schiefermayr_jmi12} belong to the 
few exceptions devoted directly to the study of Jacobian elliptic functions as functions of complex parameter $m$.

The aim of this paper is an investigation of properties of the function $m\mapsto\sn(K(m)u\mid m)$,
for $u\in\R$ and $m\in\C$, where $K(m)$ stands for the complete elliptic integral of the first kind.
Mainly, we focus on the localization of range of the function $m\mapsto|\sn(K(m)u\mid m)|$, and particularly, 
its extremal values.

The composition of the function $\sn$ with the complete elliptic integral $K$ is quite natural as one can observe, for example, from the relation with Jacobi's theta functions, see \eqref{eq:sn_eq_quotient_theta} below, or Eisenstein series \cite{dlmf22}; 
see also the discussion in \cite{carlson_siamjna83} and the applications in Section~\ref{sec:app}.

While $m\mapsto\sn(K(m)u\mid m)$ is an analytic function in the cut-plane $\C\setminus[1,\infty)$ with the
branch cut in $[1,\infty)$, its modulus is well defined, bounded and continuous function in the whole complex plane $\C$.
Our main result is the following theorem reflecting the non-trivial properties of $m \mapsto |\sn(K(m)u\mid m)|$. 
\begin{thm}\label{thm:summary}
	The following statements hold true.
	\begin{enumerate}[{\upshape i)}]
		\item For all $u\in(0,1)$ and $m\notin \{z \in \C \,:\, |z-1|<1 \wedge |z|>1 \}$, it holds 
		\begin{equation}\label{|sn|<1}
		|\sn(K(m)u\mid m)|<1.
		\end{equation}
		\item For $u\in(0,1/2]$ the function $m\mapsto|\sn(K(m)u\mid m)|$ has unique global maximum
		located at $m=1$ with the value equal to $1$, i.e.,
		\[
		|\sn(K(1)u\mid 1)|=1 \quad \mbox{ and } \quad |\sn(K(m)u\mid m)|<1 \; \mbox{ for all } m\neq1
		\]
		(where the value at $m=1$ is to be understood as the respective limit). 
		\item For $u\in(1/2,1)$, the function $m\mapsto|\sn(K(m)u\mid m)|$ has a global maximum
		located in the interval $(1,2)$ with the value exceeding $1$, i.e., 
		\[
		\max_{m\in\C}|\sn(K(m)u\mid m)|=|\sn(K(m^{*})u\mid m^{*})|>1 \; \mbox{ for some } m^{*}\in(1,2).
		\]
	\end{enumerate}
\end{thm}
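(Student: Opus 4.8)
The plan is to combine a maximum-principle argument, which forces any maximum exceeding $1$ onto the branch cut, with a local expansion at the branch point $m=1$ that exhibits the threshold $u=1/2$. Write $f(m)=\sn(K(m)u\mid m)$; it is analytic on $\C\setminus[1,\infty)$, while $|f|$ is continuous and bounded on all of $\C$. Let $R=\{z\in\C:|z-1|<1\wedge|z|>1\}$ be the region of part (i). Then $R\cap\R=(1,2)$, and since $R$ is symmetric about the real axis, $R\setminus\R=R^{+}\cup R^{-}$ splits into its open upper and lower halves, on each of which $f$ is analytic.

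First I would localize the maximum. On the bounded domain $R^{+}$ the function $|f|$ is subharmonic, so $\max_{\overline{R^{+}}}|f|=\max_{\partial R^{+}}|f|$; the boundary $\partial R^{+}$ consists of two circular arcs contained in $\partial R$, on which $|f|<1$ by part (i) and continuity, together with the segment $[1,2]$. The same holds for $R^{-}$, while $|f|<1$ on $\C\setminus R$ by part (i). Hence
\[
\max_{\C}|f|=\max\Bigl(1,\ \max_{[1,2]}|f|\Bigr),
\]
and whenever the right-hand maximum exceeds $1$ it is attained on $[1,2]$; as $|f|=1$ at $m=1$ (the limiting value, equal to $1$ for every $u\in(0,1)$) and $|f|<1$ at the endpoint $m=2$, it is then attained at an interior point $m^{*}\in(1,2)$. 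It remains only to prove $\max_{[1,2]}|f|>1$ for $u\in(1/2,1)$.

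For this I would analyse $f$ on the cut as $m\to1^{+}$. Setting $\mu=1/m\in(0,1)$ and approaching the cut from above, the reciprocal-modulus transformation gives $f(m)=\sqrt{\mu}\,\sn(\tilde Ku-\ii\tilde K'u\mid\mu)$ with $\tilde K=K(\mu)$ and $\tilde K'=K(1-\mu)$, and the addition formula for $\sn$ of a complex argument produces
\[
|f|^{2}=\mu\,\frac{s^{2}d_{1}^{2}+c^{2}d^{2}s_{1}^{2}c_{1}^{2}}{(1-d^{2}s_{1}^{2})^{2}},
\]
where $s,c,d$ denote $\sn,\cn,\dn$ at $(\tilde Ku,\mu)$ and $s_{1},c_{1},d_{1}$ denote $\sn,\cn,\dn$ at $(\tilde K'u,1-\mu)$. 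As $\mu\to1^{-}$ one has $\tilde K\to\infty$ and $\tilde K'\to\pi/2$, hence $s\to1$, $c,d\to0$, $s_{1}\to\sin(\pi u/2)$, $c_{1}\to\cos(\pi u/2)$, $d_{1}\to1$; the dominant small quantity is $c^{2}=\cn^{2}(\tilde Ku,\mu)$, of order $(1-\mu)^{u}$, which for $u<1$ dwarfs the $O(1-\mu)$ corrections. Expanding to first order in $c^{2}$ collapses the quotient to
\[
|f|^{2}=1-\cos(\pi u)\,\cn^{2}(\tilde Ku,\mu)+o\bigl(\cn^{2}(\tilde Ku,\mu)\bigr),
\]
so that $|f|>1$ just to the right of $m=1$ precisely when $\cos(\pi u)<0$, i.e. when $u>1/2$. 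This gives $\max_{[1,2]}|f|>1$ and, with the previous paragraph, completes the argument.

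I expect the main obstacle to be this asymptotic step: one must control the two competing scales $1-\mu$ and $\cn^{2}\sim(1-\mu)^{u}$, verifying that the genuine $O(1-\mu)$ contributions (from $K(1-\mu)$, from $d_{1}$, and from the term $(1-\mu)s^{2}$ in $d^{2}=c^{2}+(1-\mu)s^{2}$) are of strictly lower order than $c^{2}$, and one must fix the branch of $K(m)$ across the cut so that the imaginary part $-\ii\tilde K'u$ carries the correct sign.
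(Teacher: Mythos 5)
Your proposal does not prove the statement as given, because it treats part (i) as an input rather than as a conclusion. You bound $|f|$ on the two circular arcs of $\partial R^{+}$ and on $\C\setminus R$ ``by part (i)'', but part (i) is itself the largest portion of the theorem: in the paper it requires the ascending and descending Landen transformations to get $|f|<1$ on $\partial\D\setminus\{1\}$ and on $\partial\D_{1}$ (Lemmas \ref{lem:sn_mod_|m|=1} and \ref{lem:sn_mod_strict_ineq_D1}, the latter resting on the elementary inequality of Lemma \ref{lem:aux_lem}), a M\"obius/averaging argument to propagate the boundary bound into $\D$ (Proposition \ref{prop:sn_mod_strict_ineq}), the decay of $f$ at infinity (Proposition \ref{prop:sn_lim_infty}), a separate bound on $[2,\infty)$, and the maximum modulus principle for unbounded regions. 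None of this is supplied or replaced in your proposal, so the argument is incomplete from the outset. Granting part (i), your localization $\max_{\C}|f|=\max\bigl(1,\max_{[1,2]}|f|\bigr)$ is correct, and your local expansion at $m=1^{+}$ is genuinely a different and shorter route to part (iii) than the paper's: it agrees with what one obtains from the paper's identity \eqref{eq:sn_abs_simpl}, whose right-hand side tends to $1-c^{2}\cos(\pi u)+o(c^{2})$ as $\mu\to1^{-}$, whereas the paper instead proves a global monotonicity statement (Lemma \ref{lem:ineq}) for $\varphi(u,\mu)=\DC^{2}(K(\mu_{1})u\mid\mu_{1})-\DC^{2}(K(\mu)u\mid\mu)$ on all of $(0,1)$.

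The second gap is that a purely local expansion at $m=1$ cannot deliver part (ii). For $u\in(0,1/2)$ it shows $|f|<1$ only in a one-sided neighborhood of $m=1$ on the cut, and says nothing about, say, $m=1.5$; at the borderline value $u=1/2$ the leading coefficient $\cos(\pi u)$ vanishes, so the expansion is inconclusive even locally. The paper closes exactly this hole by showing that $\varphi(u,\cdot)$ is strictly increasing on $(0,1)$ with limit $\tan^{2}(\pi u/2)\le 1$ as $\mu\to1^{-}$ when $u\le1/2$, which via \eqref{eq:sn_abs_simpl} gives $|f|<1$ on the whole of $(1,\infty)$; it then uses the strong form of the maximum principle (the averaging property) to get the strict inequality inside the lens and hence uniqueness of the maximum at $m=1$. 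To complete your argument you would need some such global control of $|f|$ along the cut, not just its germ at the branch point; the asymptotic step you flag as the main obstacle is in fact correct but is the easier half of what is missing.
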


Notice that there is no loss of generality in restricting $u$ to $(0,1)$, which is assumed throughout the whole paper, 
since the function $u\mapsto|\sn(K(m) u \mid m)|$ is $2$-periodic, $\sn(K(m) (2-u) \mid m)=\sn(K(m) u \mid m)$ and 
$\sn(0\mid m)=0$, $\sn(K(m)\mid m)=1$.
	
The proof of Theorem \ref{thm:summary}, consisting of several steps, is presented in Section \ref{sec:proof}. Besides the employment of many known 
properties (transformation and addition rules, derivatives w.r.t.~to both argument and parameter, asymptotic expansions 
as $m \to 0$ and $m \to 1$) of Jacobian elliptic functions, it heavily relies on the maximum modulus principle both for bounded and unbounded regions.

The situation for $m \in \{z \in \C \,:\, |z-1|<1 \wedge |z|>1 \}$ is more delicate and the region where \eqref{|sn|<1}
holds seems to have an interesting shape, see Figure \ref{fig:regions}. Moreover, the value of the global maximum of $|\sn(K(m)u\mid m)|$ on $(u,m) \in \R \times \C$ 
seems to exceed $1$ only slightly by approx.~$0.01$. For more details and other numerical results, see Section \ref{sec:num}.

Finally, in Section \ref{sec:app}, we explain an application of Theorem \ref{thm:summary} in the spectral analysis of certain family of semi-infinite complex Jacobi matrices
whose diagonal vanishes and off-diagonal is an unbounded periodically modulated sequence. In this case, the coupling constant presented in the modulated weight coincides with the modulus of a Jacobian elliptic function. The spectral problem for the corresponding family of orthogonal polynomials, with modulus being restricted to $(0,1)$, 
has been studied by Carlitz in \cite{carlitz_dmj60,carlitz_dmj61}. His motivation was, in turn, based on beautiful continued fraction formulas for Laplace transform of powers of Jacobian elliptic functions, which goes back to Stieltjes \cite{stieltjes93}. Theorem \ref{thm:summary} allows to treat the spectral analysis for general complex parameter; details are elaborated in a subsequent paper \cite{sieglstampach_inprep}.


\section{Preliminaries}

\subsection{Notations, auxiliary functions and analyticity properties}
\label{subsec:not}

As all the Jacobian elliptic functions depend on $k^{2}$ rather than the modulus $k$ itself, we primarily follow 
Milne-Thompson's notation using parameter $m$ instead of the modulus $k$, i.e. $m=k^{2}$ and $\sn(z\mid m)=\sn(z,k)$, 
etc; this is also the case for Abramowitz \& Stegun \cite[Chap.~16]{abramowitz64} as well as \cite{walker_rslps03, walker_cmft09}.
In the classical theory, the modulus $k$ is accompanied with the complementary modulus $k'$ which are related by 
equation $k^{2}+k'^{2}=1$;  in terms of parameter we use $m$ and $m_1:=1-m$.

We use the following notation for the upper and lower complex half-plane 
\[
\C_{\pm}:=\{z\in\C \mid \Im z \gtrless 0\},
\]
and the subsets of $\C$ related to the unit disks centered and $0$ and $1$ are denoted by
\begin{equation*}
\D:=\{z\in\C \: ,\: |z|<1\}, \quad \D_{1}:=\{z\in\C \: ,\: |z-1|<1\}, \quad  \overline{\D}:=\D\cup \partial \D, \quad \overline{\D}_1:=\D_1\cup \partial \D_1.
\end{equation*}
Further, throughout the whole paper, the principal part of the square root is always used.

Recall the complete elliptic integral of the first kind is defined by 
\[
  K(m)=\int_{0}^{1}\frac{\dd t}{\sqrt{(1-t^{2})(1-mt^{2})}}
\]
and it is analytic for $m\in\mathbb{C}\setminus[1,\infty)$. 
If not necessary, we will suppress the dependence on $m$ in the notation of~$K$.
$K(m)$ has a singularity in $m=1$ and a branch cut ain $(1,\infty)$. Limit values of $K(m)$, for $m>1$, exist if $m$ is approached from one of the half-planes
$\C_{+}$ or $\C_{-}$.
More precisely, one has the connection formula
\begin{equation}
 K\left(1/m\right)=m^{1/2}\left[K(m)\mp\ii K'(m)\right],
\label{eq:K_connect}
\end{equation}
where $K'(m)=K(m_1)$ and the upper or lower sign is taken according as $m\in\C_{+}$ or $m\in\C_{-}$, respectively. 
For references and an interesting discussion of this dichotomy in sign, see \cite{fettis_siamjma70}.

In the theory of elliptic functions, a conformal mapping known as the elliptic modular function 
$\lambda:\tau\mapsto m$ is investigated. For the inverse mapping, one has
\begin{equation}\label{tau.def}
\tau=\ii \frac{K'(m)}{K(m)}.
\end{equation}
The complementary variable to $\tau$ is denoted here by $\tau_{1}$
where $\tau_{1}=-1/\tau$. In addition, the nome $q$ is given by 
\begin{equation}\label{q.def}
q = e^{\ii\pi\tau} = e^{-\pi \frac{K'(m)}{K(m)}}.
\end{equation}
Properties of the function $\lambda$ are essential for understanding of 
dependencies and analyticity of mappings relating quantities $m\leftrightarrow\tau\leftrightarrow q$,
see \cite[Chap.~7, \S 8]{chandrasekharan85}.

Further recall that all Jacobian elliptic functions can be defined as a quotient of Jacobi's theta functions, 
see \cite[Chap.~2]{lawden89}. For the function $\sn$, one has
\begin{equation}
 \sn(K(m)u\mid m)=\frac{\theta_{3}(0,q)}{\theta_{2}(0,q)}\frac{\theta_{1}(u\pi/2,q)}{\theta_{4}(u\pi/2,q)}.
\label{eq:sn_eq_quotient_theta}
\end{equation}
All four theta functions are analytic 
functions in $m$ for $m\in\mathbb{C}\setminus[1,\infty)$, see \cite[Sec.~4]{walker_rslps03}.
Since all zeros of the theta functions are known explicitly, see \cite[Eq.~1.3.10]{lawden89}, it can be easily verified that theta functions in the denominator of the RHS in \eqref{eq:sn_eq_quotient_theta} 
never vanish for $u\in\R$ and $m\in\mathbb{C}\setminus[1,\infty)$. Consequently, for $u\in\mathbb{R}$, the function $m\mapsto\sn(K(m)u \mid m)$ is 
analytic in the cut-plane $\mathbb{C}\setminus[1,\infty)$.

Moreover, since $q(\overline{m})=\overline{q(m)}$ and $\theta_{l}(x,\overline{q})=\overline{\theta_{l}(x,q)}$, for $x\in\mathbb{R}$ and $l\in\{1,2,3,4\}$, it follows from \eqref{eq:sn_eq_quotient_theta} that, for $u\in\R$ and $m\in\mathbb{C}\setminus[1,\infty)$,
\begin{equation}
 \overline{\sn(K(m)u \mid m)}=\sn(K(\overline{m})u \mid \overline{m}).
\label{eq:sn_complex_conj} 
\end{equation}

\subsection{Asymptotic expansions for $m$ close to $0$ and $1$}

For later purposes, we first derive asymptotic expansions of $\sn(K(m)u \mid m)$, $\cn(K(m)u \mid m)$ and $\dn(K(m)u \mid m)$ for $m$  close to $0$ and $1$. 

Restricted to $m \in (0,1)$, Carlson and Todd proved in \cite{carlson_siamjna83} that, for $u\in(0,1)$, the function 
$m\mapsto\sn(K(m)u\mid m)$ is strictly increasing on $(0,1)$, cf.~\cite[Thm.~1]{carlson_siamjna83}, and found its  asymptotic behavior as $m\to0+$ and $m\to1-$, cf.~\cite[Sec.~4, Thm.~3]{carlson_siamjna83}. 

We require $m$ to be confined to the set $\C\setminus[1,\infty)$ only. While the proof for $m \to 0$ follows the usual strategy and straightforwardly reduces to known facts, the case when $m\to1$ is a bit more difficult and we present a method based of the Fourier series expansions of some Jacobian elliptic functions.

\begin{lem}\label{lem:asym}
For $u\in(0,1)$, we have the following asymptotic expansions
\begin{align}
\sn(K(m)u \mid m)&=\sin\left(\pi u/2\right)+O(m), \label{eq:sn_asympt_m_to_zero} \\
\cn(K(m)u \mid m)&=\cos\left(\pi u/2\right)+O(m), \label{eq:cn_asympt_m_to_zero} \\
\dn(K(m)u \mid m)&=1+O(m), \label{eq:dn_asympt_m_to_zero}
\end{align}
as $m\to0$, $m\in\C\setminus[1,\infty)$ 
and
\begin{align}
\sn(K(m)u\mid m)&=1-2^{1-4u} m_{1}^{u}+o\left(m_{1}^{u}\right), \label{eq:sn_asympt_m_to_1} 
\\
\cn(K(m)u\mid m)&=2^{1-2u}m_{1}^{u/2}+o\left(m_{1}^{u/2}\right), \label{eq:cn_asympt_m_to_1} 
\\
\dn(K(m)u \mid m)&=2^{1-2u}m_{1}^{u/2}+o\left(m_{1}^{u/2}\right), \label{eq:dn_asympt_m_to_1} 
\end{align}
as $m\to1$, $m\in\C\setminus[1,\infty)$. 
\end{lem}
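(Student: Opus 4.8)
The plan is to handle the two regimes $m\to 0$ and $m\to 1$ by quite different methods, as the statement of the lemma already hints.
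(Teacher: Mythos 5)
There is no proof here to evaluate: the proposal consists of a single sentence announcing an intention to treat the regimes $m\to0$ and $m\to1$ separately, which is already implicit in the statement of the lemma, and then stops. Every substantive step is missing. For the record, what you would need to supply is the following. For $m\to0$, the standard route (and the one the paper takes) is to express $\sn$, $\cn$, $\dn$ as quotients of theta functions via \eqref{eq:sn_eq_quotient_theta} and its companions, use the expansions of the theta functions as the nome $q\to0$, and then convert $O(q)$ into $O(m)$ via the relation $m=16q\prod_{n\ge1}\bigl((1+q^{2n})/(1+q^{2n-1})\bigr)^{8}$, which gives $q=m/16+O(m^{2})$. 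This part is routine but must be written down, and you must also justify that the expansions hold for complex $m$ approaching $0$, not just real $m$. For $m\to1$ the matter is genuinely delicate: the exponents $m_{1}^{u}$ and $m_{1}^{u/2}$ are non-integer, so no naive Taylor expansion applies, and the constant $2^{1-4u}$ has to come from somewhere. The paper obtains it by writing $\sn(K(m)u\mid m)=-\ii\,\SC(\tau_{1}K(m_{1})u\mid m_{1})$ via Jacobi's imaginary transformation, expanding the Fourier series \eqref{eq:sc_fourier}--\eqref{eq:dc_fourier} for $\SC$, $\NC$, $\DC$ in powers of the nome of the complementary parameter, and extracting the leading fractional power $q^{u}\sim(m_{1}/16)^{u}$. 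Without an argument of this kind (or an equivalent one), the claimed expansions \eqref{eq:sn_asympt_m_to_1}--\eqref{eq:dn_asympt_m_to_1} are unsupported, and the lemma remains unproved.
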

\begin{proof}
The case $m\to0$: It follows that $q\to0$ and one can make use of the
relation between the three functions and Jacobi's theta functions \cite[Eqs.~2.1.1--3]{lawden89} together
with the well known expansions of the theta functions for $q\to0$, see \cite[Eqs.~2.1.14--17]{lawden89}.
This yields
\begin{align*}
\sn(K(m)u \mid m)&=\sin\left(\pi u/2\right)+O(q),\\
\cn(K(m)u \mid m)&=\cos\left(\pi u/2\right)+O(q),\\
\dn(K(m)u \mid m)&=1+O(q),
\end{align*}
for $m\to0$.  Of course, it is desirable to rewrite the RHSs in the last expansions in terms of the modulus $m$.
To do this, recall \cite[Eq.~2.1.12]{lawden89}
\[
m=16q\prod_{n=1}^{\infty}\left(\frac{1+q^{2n}}{1+q^{2n-1}}\right)^{\!8},
\]
from which one deduces
\begin{equation}
q=\frac{1}{16}m+\frac{1}{32}m^{2}+O\left(m^{3}\right), \quad m\to0,
\label{eq:q_asympt_m_to_zero}
\end{equation}
thus we arrive at the expansions \eqref{eq:sn_asympt_m_to_zero}--\eqref{eq:dn_asympt_m_to_zero} in the claim.

The case $m\to1$:
First, recall Fourier series expansions for functions $\SC$, $\NC$, and $\DC$, see \cite[Eqs.~16.23.7--9]{abramowitz64}, valid whenever $|\Im z/K|<2\Im\tau$, see \cite[Sec.~8.7]{lawden89},
\begin{align}
\SC(z \mid m)&=\frac{\pi}{2m_{1}^{1/2}K}\tan\left(\frac{\pi z}{2K}\right)
+\frac{2\pi}{m_{1}^{1/2}K}\sum_{n=1}^{\infty}\frac{(-1)^{n}q^{2n}}{1+q^{2n}}\sin\left(\frac{n\pi z}{K}\right)\!, \label{eq:sc_fourier}\\
\NC(z \mid m)&=\frac{\pi}{2m_{1}^{1/2}K}\sec\left(\frac{\pi z}{2K}\right)
-\frac{2\pi}{m_{1}^{1/2}K}\sum_{n=0}^{\infty}\frac{(-1)^{n}q^{2n+1}}{1+q^{2n+1}}\cos\left(\frac{(2n+1)\pi z}{2K}\right)\!, \label{eq:nc_fourier}\\
\DC(z \mid m)&=\frac{\pi}{2K}\sec\left(\frac{\pi z}{2K}\right)
+\frac{2\pi}{K}\sum_{n=0}^{\infty}\frac{(-1)^{n}q^{2n+1}}{1-q^{2n+1}}\cos\left(\frac{(2n+1)\pi z}{2K}\right)\!. \label{eq:dc_fourier}
\end{align}
Writing the trigonometric functions in \eqref{eq:sc_fourier}--\eqref{eq:dc_fourier} in terms of exponentials and using \eqref{q.def}, we arrive at formulas that are more convenient for our purposes:
\begin{align}
\SC(\tau K u \mid m)&=\frac{\ii\pi}{2m_{1}^{1/2}K}\frac{1-q^{u}}{1+q^{u}}
+\frac{\ii\pi}{m_{1}^{1/2}K}\sum_{n=1}^{\infty}(-1)^{n}\frac{q^{(2-u)n}}{1+q^{n}}\left(1-q^{2nu}\right)\!, \label{eq:sc_fourier_adj}\\ 
\NC(\tau K u \mid m)&=\frac{\pi}{m_{1}^{1/2}K}\frac{q^{u/2}}{1+q^{u}}
-\frac{\pi}{m_{1}^{1/2}K}\sum_{n=0}^{\infty}(-1)^{n}\frac{q^{(n+1/2)(2-u)}}{1+q^{2n+1}}
\left(1+q^{(2n+1)u}\right)\!, \label{eq:nc_fourier_adj}\\
\DC(\tau K u \mid m)&=\frac{\pi}{K}\frac{q^{u/2}}{1+q^{u}}
+\frac{\pi}{K}\sum_{n=0}^{\infty}(-1)^{n}\frac{q^{(n+1/2)(2-u)}}{1-q^{2n+1}}\left(1+q^{(2n+1)u}\right)\!, \label{eq:dc_fourier_adj}
\end{align}
where $u\in(-2,2)$. If $m\to0$, then $q\to0$, thus, from equalities \eqref{eq:sc_fourier_adj}--\eqref{eq:dc_fourier_adj},
we immediately deduce the asymptotic expansions formulas for $m\to0$ in terms of $q$ and subsequently, taking into account \eqref{eq:q_asympt_m_to_zero} 
and noticing that $K(m)\to\pi/2$ for $m\to0$, we obtain the asymptotic expansions
%
\begin{align}
\SC(\tau K u \mid m) &=\ii-\ii 2^{1-4u} m^{u}+o\left(m^{u}\right), \label{eq:sc_asympt_m_to_zero} \\
\NC(\tau K u\mid m)&=2^{1-2u}m^{u/2}+o\left(m^{u/2}\right), \label{eq:nc_asympt_m_to_zero} \\
\DC(\tau K u\mid m)&=2^{1-2u}m^{u/2}+o\left(m^{u/2}\right), \label{eq:dc_asympt_m_to_zero}
\end{align}
for $m\to0$ and $u\in(0,1)$.

Finally, by the Jacobi's imaginary transformation \cite[Eq.~16.20.1]{abramowitz64},
one gets
\[
\sn(K(m)u \mid m)=-\ii\SC\left(\ii K(m)u \mid m_1\right)=-\ii\SC\left(\tau_{1}K(m_1)u \mid m_1\right)
\]
where the last equality holds since for the complementary variable $\tau_1$ one has $\tau_{1}=-1/\tau=\ii K(m)/K(m_1)$. 
Note that if $m\to1$, then $m_{1}\to0$, thus to obtain \eqref{eq:sn_asympt_m_to_1}, it suffices to apply \eqref{eq:sc_asympt_m_to_zero}.
The remaining two expansions for $\cn(K(m)u \mid m)$ and $\dn(K(m)u \mid m)$ follow in an
analogous way; the corresponding Jacobi's imaginary transformation \cite[Eqs.~16.20.2--3]{abramowitz64} 
and expansions \eqref{eq:nc_asympt_m_to_zero}--\eqref{eq:dc_asympt_m_to_zero} are used.
\end{proof}
\section{Proof of the main result}
\label{sec:proof}

\subsection{Region $m \in \overline \D$}

We begin with analysis of $|\sn(K(m)u\mid m)|$ for $m$ in $\partial \D \setminus \{1 \}$ and subsequently extend 
the obtained inequality on $\overline \D \setminus \{1\}$.

\begin{lem}\label{lem:sn_mod_|m|=1}
For all $(u,m)\in(0,1)  \times \partial \D \setminus\{1\}$, we have
 \[
  |\sn(K(m)u\mid m)|<1.
 \]
\end{lem}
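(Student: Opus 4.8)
The plan is to parametrize the boundary circle $\partial\D$ and reduce the inequality to a statement about the modulus of the nome $q$ together with the theta-function representation \eqref{eq:sn_eq_quotient_theta}. First I would write $m=e^{2\ii\varphi}$ with $\varphi\in(0,\pi)$, so that $m$ ranges over $\partial\D\setminus\{1\}$. The crucial observation is how the map $m\mapsto\tau\mapsto q$ behaves on this circle: by the symmetry relation $q(\overline m)=\overline{q(m)}$ quoted after \eqref{eq:sn_complex_conj}, and the fact that the involution $m\mapsto 1/m$ coincides with $m\mapsto\overline m$ on $|m|=1$, the connection formula \eqref{eq:K_connect} should pin down $|q(m)|$ on the circle. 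The expectation is that $|q(m)|$ is constant (or at least controlled) along $\partial\D$, which would make the whole boundary behave like a single transformed copy of a real-modulus situation.

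Concretely, I would substitute \eqref{eq:sn_eq_quotient_theta} and estimate $|\sn(K(m)u\mid m)|$ directly. Using the product/series expansions of the theta functions $\theta_1,\theta_2,\theta_3,\theta_4$ in powers of $q$ (from \cite[Chap.~1]{lawden89}), the quotient $\theta_3\theta_1/(\theta_2\theta_4)$ becomes a convergent series in $q$. The goal is to bound each building block: the prefactor $\theta_3(0,q)/\theta_2(0,q)$ and the $u$-dependent factor $\theta_1(u\pi/2,q)/\theta_4(u\pi/2,q)$. For real $q\in(0,1)$ and $u\in(0,1)$ the quotient is real and lies strictly below $1$ (this is essentially the Carlson--Todd monotonicity result cited in the preamble, since $\sn(K(m)u\mid m)$ increases from $\sin(\pi u/2)$ to $1$ on $(0,1)$ and equals $1$ only in the limit $m\to1$, i.e. $q\to1$). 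The task on $\partial\D$ is to show that passing to a boundary value $q=|q|e^{\ii\psi}$ with genuinely complex phase can only \emph{decrease} the modulus compared with the aligned real case.

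The natural mechanism for this last step is a term-by-term triangle-inequality argument combined with the alternating-sign structure visible in the theta expansions: if I can show that replacing $q$ by $|q|$ makes every contribution add in phase (so the complex sum has modulus no larger than the corresponding real sum), then $|\sn(K(m)u\mid m)|\le \sn(K(|q|\text{-image})u\mid\cdot)<1$, with strict inequality coming from the fact that the limiting value $1$ is attained only as $q\to1$, which corresponds to $m\to1$, the excluded point. I would therefore track $|q(m)|$ for $m\in\partial\D\setminus\{1\}$ and verify it stays strictly below $1$, ruling out the boundary endpoint. A clean alternative, if the phase-alignment bookkeeping is awkward, is to note $|\sn|\le |\sn|+|\cn-\text{(suitable)}|$ type identities and instead exploit the defining ODE / addition formula to compare $|\sn|^2$ with a manifestly real dominating expression.

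The main obstacle I anticipate is precisely the phase control: the theta quotient is not a positive-coefficient series, so the naive triangle inequality does not automatically give the sharp bound, and one must argue that the particular combination of signs (the $(-1)^n$ patterns appearing in \eqref{eq:sc_fourier}--\eqref{eq:dc_fourier}) conspires in favor of the real-aligned configuration. Establishing that $|q(m)|<1$ uniformly on the punctured circle, and that the modulus of the full quotient is monotone in $|q|$ for fixed phase, is the technical heart; I expect this to require a careful use of the explicit zero locations of the theta functions (as invoked in the excerpt to rule out vanishing denominators) to confirm that no cancellation in the denominator can spoil the estimate. Once the boundary inequality on $\partial\D\setminus\{1\}$ is in hand, extending it to $\overline\D\setminus\{1\}$ follows from the maximum modulus principle applied to the analytic function $m\mapsto\sn(K(m)u\mid m)$ on the disk, exactly as the authors announce in the introduction.
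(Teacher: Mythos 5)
Your strategy has a genuine gap at exactly the point you yourself flag as the ``technical heart.'' The whole argument hinges on the claim that replacing the complex nome $q$ by $|q|$ in the theta quotient $\theta_3(0,q)\theta_1(u\pi/2,q)/\bigl(\theta_2(0,q)\theta_4(u\pi/2,q)\bigr)$ can only increase the modulus, so that the boundary case is dominated by a real-modulus case where Carlson--Todd monotonicity applies. No mechanism for proving this is given, and the obvious tool fails: a term-by-term triangle inequality bounds the numerator $|\theta_1|$ and the prefactor from above, but for the denominator $|\theta_4(u\pi/2,q)|$ you need a \emph{lower} bound of the form $|\theta_4(x,q)|\ge\theta_4(x,|q|)$, which is the wrong direction for the triangle inequality and is not a consequence of the nonvanishing of $\theta_4$. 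Worse, the inequality to be proved is sharp --- $|\sn(K(m)u\mid m)|\to1$ as $m\to1$ along $\partial\D$ --- so any estimate that loses a fixed amount (as phase-alignment bounds typically do) cannot close the argument uniformly on the punctured circle. A secondary inaccuracy: $|q(m)|$ is \emph{not} constant on $\partial\D$; writing $m=e^{4\ii\theta}$ and using \eqref{eq:K_upper_semi_circle} and \eqref{eq:K'_unit_circle} one finds $\Im\tau=2ab/(a^{2}+b^{2})$ with $a=K(\cos^{2}\theta)$, $b=K(\sin^{2}\theta)$, so $|q|$ runs from $e^{-\pi}$ at $m=-1$ up to $1$ as $m\to1$. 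That $|q|<1$ on the punctured circle is true but by itself gives nothing quantitative.

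For comparison, the paper avoids all phase bookkeeping by transforming the problem into one involving only \emph{real} moduli and \emph{real} arguments: an ascending Landen transformation, the double-argument formula, and Jacobi's real transformation convert $\sn^{2}(K(e^{4\ii\theta})u\mid e^{4\ii\theta})$ into $e^{-2\ii\theta}\,(1-\cn(w\mid\cos^{2}\theta))/(1+\cn(w\mid\cos^{2}\theta))$ with $w=[K(\cos^{2}\theta)+\ii K(\sin^{2}\theta)]u$ via the identity \eqref{eq:K_upper_semi_circle}; then the addition-type formula for $\cn$ of a complex argument expresses everything through $\sn$, $\cn$, $\dn$ at the real points $K(\cos^{2}\theta)u$ and $K(\sin^{2}\theta)u$ with real parameters, and the desired inequality reduces to the manifest positivity of a product of these real quantities for $u\in(0,1)$. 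If you want to salvage your approach, you would need to actually establish a domination principle of the form $|\sn(K(m)u\mid m)|\le\sn(K(\tilde m)u\mid\tilde m)$ for a suitable real $\tilde m\in(0,1)$, which is a nontrivial statement in its own right and not supplied by the expansions \eqref{eq:sc_fourier}--\eqref{eq:dc_fourier}.
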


\begin{proof}
 Taking into account \eqref{eq:sn_complex_conj}, it is sufficient to verify the statement for $m=e^{4\ii\theta}$ with $\theta\in(0,\pi/4]$.
  First, we apply the ascending Landen transformation \cite[Eq.~16.14.2]{abramowitz64} on $\sn(K(m)u \mid m)$ and receive the equality
%
\[
  \sn\left(K\left(e^{4\ii\theta}\right)u \mid e^{4\ii\theta}\right)=\frac{e^{-\ii\theta}}{\cos\theta}
  \frac{\sn\left(K\left(e^{4\ii\theta}\right)v\mid \cos^{-2}\theta\right)\cn\left(K\left(e^{4\ii\theta}\right)v\mid \cos^{-2}\theta\right)}
  {\dn\left(K\left(e^{4\ii\theta}\right)v\mid \cos^{-2}\theta\right)},
 \]
where $v=ue^{\ii\theta}\cos\theta$. Next, by the double argument formula \cite[Eq.~16.18.5]{abramowitz64}, we arrive at
 \[
  \sn^{2}\left(K\left(e^{4\ii\theta}\right)u \mid e^{4\ii\theta}\right)=e^{-2\ii\theta}
  \frac{1-\dn\left(2K\left(e^{4\ii\theta}\right)v\mid \cos^{-2}\theta\right)}{1+\dn\left(2K\left(e^{4\ii\theta}\right)v\mid \cos^{-2}\theta\right)}
 \]
and by the Jacobi's real transformation \cite[Eq.~16.11.4]{abramowitz64}, we get
  \[
  \sn^{2}\left(K\left(e^{4\ii\theta}\right)u \mid e^{4\ii\theta}\right)=e^{-2\ii\theta}
  \frac{1-\cn\left(2K\left(e^{4\ii\theta}\right)e^{\ii\theta}u \mid \cos^{2}\theta\right)}
  {1+\cn\left(2K\left(e^{4\ii\theta}\right)e^{\ii\theta}u \mid \cos^{2}\theta\right)}.
 \]
 Finally, the application of formula 
	\begin{equation}
	K\left(e^{4\ii\theta}\right)=\frac{1}{2}e^{-\ii\theta}
	\left[K\left(\cos^{2}\theta\right)+\ii K\left(\sin^{2}\theta\right)\right], \; \mbox{ for } \theta\in(0,\pi/4],
	\label{eq:K_upper_semi_circle}
	\end{equation}
see \cite{fettis_siamjma70} and references therein, yields
 \begin{equation}
    \sn^{2}\left(K\left(e^{4\ii\theta}\right)u \mid e^{4\ii\theta}\right)=e^{-2\ii\theta}
  \frac{1-\cn\left(\left[K\left(\cos^{2}\theta\right)+\ii K\left(\sin^{2}\theta\right)\right]u \mid \cos^{2}\theta\right)}
  {1+\cn\left(\left[K\left(\cos^{2}\theta\right)+\ii K\left(\sin^{2}\theta\right)\right]u \mid \cos^{2}\theta\right)}
 \label{eq:sn_sq_unit_circ}
 \end{equation}
 where $\theta\in(0,\pi/4]$.
 By \cite[Eq.~16.21.3]{abramowitz64}, we have
 \[
  \cn\left(\left[K\left(\cos^{2}\theta\right)+\ii K\left(\sin^{2}\theta\right)\right]u\mid \cos^{2}\theta\right)=
  \frac{c\cdot c_{1}-\ii s\cdot s_{1}\cdot d\cdot d_{1}}{c_{1}^{2}+\cos^{2}(\theta)\,s^{2}\cdot s_{1}^{2}}
 \]
 where the abbreviations $s=\sn\left(K\left(\cos^{2}\theta\right)u \mid \cos^{2}\theta\right)$, 
 $s_{1}=\sn\left(K\left(\sin^{2}\theta\right)u \mid \sin^{2}\theta\right)$, etc., are used.
 With the aid of the last formula and identity \eqref{eq:sn_sq_unit_circ} one deduces that
 \[
 \left|\sn\left(K\left(e^{4\ii\theta}\right)u \mid e^{4\ii\theta}\right)\right|<1
 \]
 if and only if $\left(c_{1}^{2}+\cos^{2}(\theta)\,s^{2}\cdot s_{1}^{2}\right)c\cdot c_{1}>0$. However, the latter 
 inequality is clearly true since all the functions $s$, $s_1$, $c$, $c_1$ are positive for $u\in(0,1)$.
\end{proof}

\begin{prop}\label{prop:sn_mod_strict_ineq}
 For all $(u,m)\in (0,1) \times \overline{\D}\setminus\{1\}$, we have
 \begin{equation}
 |\sn(K(m)u\mid m)|<1.
 \label{eq:sn_mod_strict_ineq}
 \end{equation}
\end{prop}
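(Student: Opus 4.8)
The plan is to extend the boundary inequality of Lemma \ref{lem:sn_mod_|m|=1} from $\partial\D\setminus\{1\}$ to the whole punctured closed disk by a maximum modulus argument. First I would observe that, since $\overline{\D}\cap[1,\infty)=\{1\}$, the function $m\mapsto\sn(K(m)u\mid m)$ is analytic on the open disk $\D$ and analytic (hence continuous) on $\overline{\D}\setminus\{1\}$. The only point of $\overline{\D}$ at which analyticity could fail is the boundary point $m=1$, but the asymptotic expansion \eqref{eq:sn_asympt_m_to_1} shows that $\sn(K(m)u\mid m)\to1$ as $m\to1$ within $\C\setminus[1,\infty)$, because $m_{1}^{u}\to0$ for $u>0$. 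Thus $m\mapsto\sn(K(m)u\mid m)$ admits a continuous extension to the full closed disk $\overline{\D}$, taking the value $1$ at $m=1$, and in particular $|\sn(K(m)u\mid m)|$ is continuous on all of $\overline{\D}$.

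With continuity on $\overline{\D}$ secured, I would invoke the maximum modulus principle on the bounded domain $\D$: the maximum of $|\sn(K(m)u\mid m)|$ over $\overline{\D}$ is attained on the boundary $\partial\D$. By Lemma \ref{lem:sn_mod_|m|=1} the modulus is strictly below $1$ on $\partial\D\setminus\{1\}$, while at $m=1$ it equals $1$; hence the boundary maximum equals exactly $1$, and therefore $|\sn(K(m)u\mid m)|\le1$ throughout $\overline{\D}$.

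It remains to upgrade this to the strict inequality \eqref{eq:sn_mod_strict_ineq} in the interior. Here I would use the strong form of the maximum modulus principle: if $|\sn(K(m_{0})u\mid m_{0})|=1$ held for some $m_{0}\in\D$, then $|\sn|$ would attain its global maximum over $\overline{\D}$ at an interior point of the connected domain $\D$, forcing $m\mapsto\sn(K(m)u\mid m)$ to be a constant of modulus $1$. This contradicts \eqref{eq:sn_asympt_m_to_zero}, which gives the value $\sin(\pi u/2)\in(0,1)$ at $m=0$. Hence $|\sn(K(m)u\mid m)|<1$ on $\D$, and combining this with Lemma \ref{lem:sn_mod_|m|=1} yields the claim on $\overline{\D}\setminus\{1\}$.

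The genuinely delicate analysis, namely the behaviour on the unit circle and the precise rate at which the modulus approaches $1$ near $m=1$, has already been carried out in Lemmas \ref{lem:sn_mod_|m|=1} and \ref{lem:asym}. Consequently the only point requiring real care in this proposition is verifying that the continuous extension to the boundary point $m=1$ is legitimate, so that the hypotheses of the maximum modulus principle are genuinely met; once that is in place the result follows essentially as a corollary of the two preceding results.
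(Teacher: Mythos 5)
Your argument is correct, and it follows the same overall strategy as the paper (continuous extension to $\overline{\D}$ via \eqref{eq:sn_asympt_m_to_1}, boundary control from Lemma~\ref{lem:sn_mod_|m|=1}, then a maximum-modulus argument), but the mechanism by which you obtain \emph{strictness} in the interior is genuinely different. The paper composes $f(m)=\sn(K(m)u\mid m)$ with the M\"obius automorphism $\varphi_m$ of $\D$ and applies the sub-mean-value inequality $|f(m)|=|g(0)|\leq\frac{1}{2\pi}\int_0^{2\pi}|g(re^{\ii\theta})|\,\dd\theta$, letting $r\to1-$ by dominated convergence; since the boundary values are $<1$ for almost every $\theta$ (the single exceptional preimage of $m=1$ has measure zero), the average is strictly below $1$ and strictness follows at once, with no appeal to the identity theorem and no need for any interior data. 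You instead use the weak maximum modulus principle to get $|f|\leq 1$ on $\overline{\D}$ and then the strong form: interior equality would force $f$ to be a unimodular constant, which you rule out with the anchor value $f(0)=\sin(\pi u/2)\in(0,1)$ from \eqref{eq:sn_asympt_m_to_zero}. Both routes are valid; yours is the more standard textbook argument and arguably more transparent, while the paper's averaging argument converts the almost-everywhere strict boundary inequality directly into a strict interior one and would survive even without knowing any explicit interior value. One small point worth making explicit in your write-up is that the supremum of $|f|$ over $\partial\D$ is actually attained (at $m=1$, by compactness and continuity of the extension), so that ``the boundary maximum equals exactly $1$'' is justified; with that noted, your proof is complete.
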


\begin{proof}
 By Lemma~\ref{lem:sn_mod_|m|=1}, the statement holds for $m\in \partial \D \setminus\{1\}$, hence it suffices  to verify the inequality for $m\in \D$.  Note that by \eqref{eq:sn_asympt_m_to_1},
 \[
 \lim_{\substack{m\to1 \\ m\in\overline{\D}\setminus\{1\}}}\sn(K(m)u\mid m)=1.
 \]
 Thus, the function $m\mapsto\sn(K(m)u\mid m)$ extends continuously to $\overline{\D}$.
 
For $m\in \D$, we denote $f(m):=\sn(K(m)u\mid m)$. 
 Recall that the M\"{o}bius transform
 \[
  \varphi_{m}(z)=\frac{m-z}{1-\overline{m}z}
 \]
 is a conformal self-map of the unit disk $\D$. Moreover, $\varphi_{m}$ maps the boundary $\partial \D$ into itself. Hence
 $g:=f\circ\varphi_{m}$ is analytic on $\D$, continuous on the closure $\overline{\D}$, and $g(0)=f(m)$.
By the Cauchy integral formula, one has
 \[
  |f(m)|=|g(0)|\leq\frac{1}{2\pi}\int_{0}^{2\pi}\left|g\left(re^{\ii\theta}\right)\right|\dd\theta,
 \]
 for any $r\in(0,1)$. By the Lebesgue's dominated convergence theorem, one can send $r\to1-$ in the above integral
and thereby obtain the inequality
 \[
  |f(m)|\leq\frac{1}{2\pi}\int_{0}^{2\pi}\left|g\left(e^{\ii\theta}\right)\right|\dd\theta.
 \]
The statement follows, since by Lemma \ref{lem:sn_mod_|m|=1}, $\left|g\left(e^{\ii\theta}\right)\right|<1$ for almost all $\theta\in(0,2\pi)$
 and hence
 \[
  \int_{0}^{2\pi}\left|g\left(e^{\ii\theta}\right)\right|\dd\theta<2\pi.
  \qedhere
 \]
\end{proof}

\subsection{Region $m \in\partial\D_{1}$}

Here we show the values of $|\sn(K(m)u\mid m)|$ remain below $1$ also for $m$ on the boundary of $\D_{1}$. 
For this purpose, we need the following auxiliary inequality.

\begin{lem}\label{lem:aux_lem}
For all $(x,m) \in (0,2K) \times [0,1/2]$, it holds
 \begin{equation}
  4\dn^{2}(x\mid m)\left(1+\cn(x\mid m)\right)-\sn^{2}(x\mid m)\left(1-\cn(x\mid m)\right)>0.
 \label{eq:aux_ineq_in_lem}
 \end{equation}
\end{lem}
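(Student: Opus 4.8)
The inequality \eqref{eq:aux_ineq_in_lem} involves only the three Jacobian elliptic functions with real argument $x\in(0,2K)$ and real parameter $m\in[0,1/2]$, so the plan is to reduce it to a statement about a single auxiliary variable using the fundamental identities. First I would use the Pythagorean relations $\sn^{2}+\cn^{2}=1$ and $\dn^{2}+m\sn^{2}=1$ to eliminate $\sn^{2}$ and $\dn^{2}$ in favour of $\cn$. Writing $c:=\cn(x\mid m)$, one has $\sn^{2}=1-c^{2}=(1-c)(1+c)$ and $\dn^{2}=1-m(1-c^{2})=1-m+mc^{2}$, so the left-hand side of \eqref{eq:aux_ineq_in_lem} becomes a function of $c$ and $m$ alone, namely
\[
4\left(1-m+mc^{2}\right)(1+c)-(1-c)(1+c)(1-c).
\]
Factoring out the common $(1+c)$, which is positive since $c=\cn(x\mid m)\in(-1,1)$ for $x\in(0,2K)$, the inequality reduces to showing
\[
P(c):=4\left(1-m+mc^{2}\right)-(1-c)^{2}>0
\]
for all $c\in(-1,1)$ and $m\in[0,1/2]$.

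\textbf{Analysis of the reduced polynomial.} The heart of the argument is then the elementary claim that the quadratic $P(c)=4m c^{2}+(1-c)^{2}\bigl(\tfrac{4(1-m)}{(1-c)^{2}}-1\bigr)$—better expanded directly as $P(c)=(4m-1)c^{2}+2c+(3-4m)$—stays strictly positive on $(-1,1)$ for the given range of $m$. I would expand it cleanly as
\[
P(c)=(4m-1)c^{2}+2c+3-4m,
\]
and check positivity by examining the two boundary-type behaviours: the values (or limits) at the endpoints $c\to\pm1$ and the location of any interior critical point or root. At $c=1$ one gets $P(1)=4m-1+2+3-4m=4>0$, and at $c=-1$ one gets $P(-1)=4m-1-2+3-4m=0$; since the endpoint $c=-1$ is excluded and $P(-1)=0$, the decisive point is to confirm that $P$ does not dip to zero anywhere in the open interval. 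I would therefore study the sign of $P$ via its discriminant or by showing $P(c)>0$ is equivalent, after writing $c=-1+t$ with $t\in(0,2)$, to a polynomial in $t$ with manifestly positive coefficients for $m\in[0,1/2]$.

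\textbf{Main obstacle.} The genuinely delicate step is the case distinction forced by the sign of the leading coefficient $4m-1$, which changes sign exactly at $m=1/2$, the endpoint of the allowed parameter range. For $m<1/4$ the parabola opens downward and for $m>1/4$ upward, so a naive endpoint check is not enough: one must rule out an interior maximum (downward case) or interior minimum (upward case) crossing zero. Since $P(-1)=0$ with the root landing precisely at the excluded endpoint, I expect the cleanest route to be the substitution $c=-1+t$, giving $P=(4m-1)(t^{2}-2t)+2(t-1)+3-4m = (4m-1)t^{2}+(2-2(4m-1))t$, i.e. $P=t\bigl[(4m-1)t+(4-8m)\bigr]$; since $t>0$ it then suffices to show the bracket $(4m-1)t+4(1-2m)$ is positive for $t\in(0,2)$ and $m\in[0,1/2]$, and the worst case is $t\to2$, where the bracket tends to $2(4m-1)+4(1-2m)=8m-2+4-8m=2>0$. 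The constant-sign conclusion, uniform in $m$, is what makes the inequality strict on the open interval; handling the boundary $m=1/2$ and $t=2$ simultaneously is the one spot demanding care, and the $t$-substitution is precisely what exposes it.
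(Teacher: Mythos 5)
Your reduction is correct and, once one loose phrase is tightened, it gives a complete proof by a genuinely different and more elementary route than the paper. The paper treats the left-hand side as a function $f(x)$, computes $f'(x)=-\sn(x\mid m)\dn(x\mid m)\,g(x)$ with $g(x)=5-4m+2(4m+1)c+3(4m-1)c^{2}$, proves $g>0$ by splitting $(0,K]$ from $(K,2K)$ (the second case requiring a further monotonicity argument for $g$ itself), and concludes from $f(2K)=0$ that $f>0$ on $(0,2K)$. You instead observe that the expression is a polynomial in $c=\cn(x\mid m)$ alone: with $t=1+c\in(0,2)$ it equals $t^{2}\bigl[(4m-1)t+4(1-2m)\bigr]$, so everything reduces to the positivity of an affine bracket — no differentiation, no case split, and the computation makes transparent why $m=1/2$ is the sharp threshold (the bracket degenerates to $t$ there). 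The one point to repair: your claim that ``the worst case is $t\to2$'' is wrong for $m>1/4$, where $4m-1>0$ makes the bracket increasing in $t$, so its infimum over $(0,2)$ is its value $4(1-2m)$ as $t\to0$, which equals $0$ when $m=1/2$; checking $t\to 2$ alone does not close the argument. The correct one-line finish is that the bracket is affine with boundary values $4(1-2m)\geq 0$ at $t=0$ and $2>0$ at $t=2$, hence
\begin{equation*}
(4m-1)t+4(1-2m)=\Bigl(1-\frac{t}{2}\Bigr)\cdot 4(1-2m)+\frac{t}{2}\cdot 2\;\geq\; t\;>\;0
\end{equation*}
for all $t\in(0,2)$ and $m\in[0,1/2]$, giving the strict inequality uniformly. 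With that substitution your argument is complete and, in my view, cleaner than the paper's.
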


\begin{proof}
 Denote the function on the LHS of inequality \eqref{eq:aux_ineq_in_lem} by $f(x)$. We show that $f$ is strictly
 decreasing on $(0,2K)$ and since $f(2K)=0$, the statement will be proved.
 
 We abbreviate $s:=\sn(x\mid m)$, etc. By using the formulas for the derivatives of Jacobian elliptic functions, see \cite[Eqs.~16.16.1--3]{abramowitz64}, and
 elementary identities $d^{2}=1-m+mc^{2}$, $s^{2}=1-c^{2}$, one computes $f'(x)=-s\cdot d \ g(x)$ where 
\[
 g(x)=2(4m+1)c+2(4m-1)c^{2}+4d^{2}+s^{2}=5-4m+2(4m+1)c+3(4m-1)c^{2}.
\]
Since $s\cdot d>0$ on $(0,2K)$, it suffices to show that $g(x)>0$ for $x\in(0,2K)$.
We distinguish to cases when $x\in(0,K]$ and $x\in(K,2K)$.

i) Let $x\in(0,K]$. Since
\[
 2(4m+1)c\geq 2c \quad \mbox{ and } \quad 3(4m-1)c^{2}\geq -3c^{2}\geq -3c,
\]
one gets $g(x)\geq 4(1-m)+1-c>0$ for all $m\in[0,1)$.

ii) Let $x\in(K,2K)$. We show that $g$ is strictly decreasing on $(K,2K)$ which together with identity $g(2K)=0$ implies positivity of $g$. Differentiating $g$ yields
\[
 g'(x)=-2s\cdot d \left[4m+1+3(4m-1)c\right].
\]
Since $c>-1$, the expression in the square brackets in the last formula can be estimated as
\[
 4m+1+3(4m-1)c>4(1-2m),
\]
hence $g'(x)<0$ for all $x\in(K,2K)$ and $m\in[0,1/2]$.
\end{proof}

\begin{lem}\label{lem:sn_mod_strict_ineq_D1}
 For all $(u,m) \in (0,1) \times \partial \D_1$, we have
 \begin{equation}
  |\sn(K(m)u\mid m)|<1.
 \end{equation}
\end{lem}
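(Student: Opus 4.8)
The plan is to mirror the proof of Lemma~\ref{lem:sn_mod_|m|=1}, exploiting the fact that $\partial\D_1$ is precisely the set of $m$ whose complementary parameter $m_1=1-m$ lies on $\partial\D$. By the conjugation symmetry \eqref{eq:sn_complex_conj} the quantity $|\sn(K(m)u\mid m)|$ is invariant under $m\mapsto\overline{m}$, so it suffices to treat the lower half of $\partial\D_1$, which I would parametrize as $m=1-e^{4\ii\theta}$, $\theta\in(0,\pi/4]$. Then $m_1=e^{4\ii\theta}\in\partial\D$ and, crucially, the complementary modulus $\sqrt{m_1}=e^{2\ii\theta}$ sits on the unit circle. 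The endpoints $m=0$ (as $\theta\to0+$) and the branch-cut point $m=2$ (at $\theta=\pi/4$) are handled at the end by continuity of $m\mapsto|\sn(K(m)u\mid m)|$ on $\C$.

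First I would transform $\sn(K(m)u\mid m)$ into Jacobian functions of a \emph{real} parameter. Applying a Landen-type transformation to $\sn(\,\cdot\mid m)$ turns the complementary modulus $e^{2\ii\theta}$ into $-\ii\tan\theta$, and a subsequent imaginary-modulus transformation (\cite[Eqs.~16.10]{abramowitz64}) replaces this purely imaginary modulus by the real modulus $\sin\theta$, i.e.\ the real parameter $\mu:=\sin^2\theta\in(0,1/2]$. Combined with the double-argument formula and with the companion of \eqref{eq:K_upper_semi_circle} for $K(1-e^{4\ii\theta})$, this should reduce $\sn^{2}(K(m)u\mid m)$ to an explicit quotient of the form $e^{\ii\alpha}N/D$ in which $N$ and $D$ are built from $s=\sn(x\mid\mu)$, $c=\cn(x\mid\mu)$, $d=\dn(x\mid\mu)$ at the single real modulus $\mu$ and the single real argument $x\in(0,2K)$ (of the form $2K(\mu)u$). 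Note $\cos^2\theta\in[1/2,1)$ would fall outside the range of Lemma~\ref{lem:aux_lem}, which is exactly why $\sin^2\theta$, and not $\cos^2\theta$, must be the surviving modulus.

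It then remains to read off $|\sn(K(m)u\mid m)|<1$ from this quotient. After clearing the manifestly positive denominators, the inequality $|N|^{2}<|D|^{2}$ is designed to collapse, up to a positive factor, exactly to
\[
4\dn^{2}(x\mid\mu)\bigl(1+\cn(x\mid\mu)\bigr)-\sn^{2}(x\mid\mu)\bigl(1-\cn(x\mid\mu)\bigr)>0,
\]
which is the content of Lemma~\ref{lem:aux_lem} for $(x,\mu)\in(0,2K)\times[0,1/2]$. Since $\mu=\sin^2\theta$ ranges in $(0,1/2]$ and $x\in(0,2K)$ for $u\in(0,1)$, Lemma~\ref{lem:aux_lem} applies directly and finishes the proof, the endpoints being recovered by continuity.

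The main obstacle I anticipate is the bookkeeping in the middle step: establishing the correct companion formula for $K(1-e^{4\ii\theta})$ with the proper half-plane sign (in the spirit of \eqref{eq:K_connect} and \eqref{eq:K_upper_semi_circle}), tracking the unimodular phase $e^{\ii\alpha}$ and the principal branch of the square root through the Landen and imaginary-modulus transformations, and verifying that $|D|^{2}-|N|^{2}$ factors precisely into a positive multiple of the left-hand side of \eqref{eq:aux_ineq_in_lem}. Checking that this algebraic reduction lands on Lemma~\ref{lem:aux_lem} \emph{exactly}, rather than on some nearby but unproven inequality, is the crux; the remaining manipulations are routine applications of the standard transformation formulas of \cite{abramowitz64}.
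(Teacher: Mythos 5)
Your proposal takes essentially the same route as the paper's proof: reduce by the conjugation symmetry \eqref{eq:sn_complex_conj} to one half of $\partial\D_1$ parametrized as $m=1-e^{4\ii\theta}$, apply the descending Landen transformation and the imaginary-modulus transformation together with $K'\left(e^{4\ii\theta}\right)=e^{-\ii\theta}K\left(\sin^2\theta\right)$ and the double-argument formulas to express $|\sn(K(m)u\mid m)|$ through Jacobian functions of the real modulus $\sigma=\sin^{2}\theta\in[0,1/2]$ at the real argument $2K(\sigma)u$, and conclude via Lemma~\ref{lem:aux_lem}. The algebraic reduction does land exactly on the inequality \eqref{eq:aux_ineq_in_lem}, as you anticipated, so the argument is correct as planned.
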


\begin{proof}
 Taking into account relation \eqref{eq:sn_complex_conj}, it suffices 
 to prove the statement for $m=1-e^{4\ii\theta}$ for $\theta\in[-\pi/4,0]$.
  Applying the descending Landen transformation \cite[Eq.~16.12.2]{abramowitz64} on $\sn\left(K\left(1-e^{4\ii\theta}\right)u \mid 1-e^{4\ii\theta}\right)$
we obtain
 \[
  \sn\left(K\left(1-e^{4\ii\theta}\right)u \mid 1-e^{4\ii\theta}\right)=\frac{e^{-\ii\theta}}{\cos\theta}
  \frac{\sn\left(K'\left(e^{4\ii\theta}\right)v\mid -\tan^{2}\theta\right)}
  {1-\ii \tan(\theta) \sn^{2}\left(K'\left(e^{4\ii\theta}\right)v\mid -\tan^{2}\theta\right)},
 \]
where $v=u e^{\ii \theta} \cos \theta$.
Next, by applying transform \cite[Eq.~16.10.2]{abramowitz64} together with equation 
 \begin{equation}
K'\left(e^{4\ii\theta}\right)=e^{-\ii\theta}K\left(\sin^{2} \theta\right) \; \mbox{ for } \theta\in[0,\pi/4),
 \label{eq:K'_unit_circle}
 \end{equation}
 see \cite{fettis_siamjma70} and references therein,
one gets
 \[
  \sn\left(K\left(1-e^{4\ii\theta}\right)u \mid 1-e^{4\ii\theta}\right)=e^{-\ii\theta}
  \frac{\sd\left(K\left(\sin^{2}\theta\right)u\mid \sin^{2}\theta\right)}
  {1-\ii\sin\theta\cos\theta\sd^{2}\left(K\left(\sin^{2}\theta\right)u\mid \sin^{2}\theta\right)}
 \]
 which holds true for $\theta\in[-\pi/4,0]$.
 
 Further, by using identities for double arguments \cite[Sec.~16.18]{abramowitz64}, one verifies that
 \begin{eqnarray*}
  \left|\sn\left(K\left(1-e^{4\ii\theta}\right)u \mid 1-e^{4\ii\theta}\right)\right|^{4}
  =\frac{\sn^{2}\left(2K\left(\sigma\right)u\mid \sigma\right)}{4\dn^{2}\left(2K\left(\sigma\right)u\mid \sigma\right)}
  \frac{1-\cn\left(2K\left(\sigma\right)u\mid \sigma\right)}{1+\cn\left(2K\left(\sigma\right)u\mid \sigma\right)}
 \end{eqnarray*}
 where $\sigma:=\sin^{2}\theta$. Consequently, 
 \[
 \left|\sn\left(K\left(1-e^{4\ii\theta}\right)u \mid 1-e^{4\ii\theta}\right)\right|<1
 \]
 if and only if
 \begin{equation*}
  4d^{2}(1+c)-s^{2}(1-c)>0
 \end{equation*}
  where the abbreviations $s=\sn\left(2K\left(\sigma\right)u \mid \sigma\right)$, etc., are used.
  This is, however, true for all $u\in(0,1)$ by Lemma \ref{lem:aux_lem}, since $\sigma\in[0,1/2]$ for  $\theta\in[-\pi/4,0]$.  
\end{proof}

\subsection{Region $m \notin \overline{\D}$ }

We take a closer look to the values of $\sn(K(m)u \mid m)$ for $|m|>1$.
If in addition $m\in\C_{+}$, then, by \eqref{eq:K_connect}, one has
\begin{equation}
 K(m)=\mu^{1/2}\left[K\left(\mu\right)+\ii K'\left(\mu\right)\right]\!
\end{equation}
where $\mu=m^{-1}$. The last formula and the Jacobi's real transformation \cite[Eq.~16.11.2]{abramowitz64} yield
\[
 \sn(K(m)u \mid m)=\mu^{1/2}\sn\left(\left[K(\mu) + \ii K'(\mu)\right]u \mid \mu\right).
\]
Furthermore, by the addition formula \cite[Eq.~16.17.1]{abramowitz64} and Jacobi's
imaginary transformation \cite[Sec.~16.20]{abramowitz64}, one arrives at the expression
\begin{equation}
 \sn(K(m)u \mid m)=\mu^{1/2}\,\frac{s\cdot d_{1}+\ii c \cdot d \cdot s_1 \cdot c_1}{1-d^{2}\cdot s_{1}^{2}}
\label{eq:sn_outside_unit_circle}
\end{equation}
where $s=\sn(K(\mu)u\mid \mu)$, $s_{1}=\sn(K(\mu_{1})u\mid \mu_{1})$, etc., and $\mu_{1}=1-\mu$.
Identity \eqref{eq:sn_outside_unit_circle} holds for $m\in\C_{+}$. If $m\in\C_{-}$, one has to change the sign at $\ii$ in the nominator in the RHS of \eqref{eq:sn_outside_unit_circle}.

For $m<1$, the function $\sn(K(m)u \mid m)$ is real-valued. The interval $(1,\infty)$ is a branch cut of $\sn(K(m)u \mid m)$ and the limiting values for $m>1$ are
\[
\sn(K(m)u \mid m)=\mu^{1/2}\,\frac{s\cdot d_{1}\pm\ii c \cdot d \cdot s_1 \cdot c_1}{1-d^{2}\cdot s_{1}^{2}}
\]
with the same abbreviation as above. The upper sign applies, if $m$ is approached from the upper-plane $\C_{+}$,
while the lower sign applies, if $m$ is approached from the lower-plane $\C_{-}$. Note, however, that the absolute 
value of $\sn(K(m)u \mid m)$ extends continuously on $(1,m)$ and one has
\begin{equation}
|\sn(K(m)u \mid m)|^{2}=\mu\frac{s^{2}\cdot d_{1}^{2}+ c^{2} \cdot d^{2} \cdot s_{1}^{2} \cdot c_{1}^{2}}
{\left(1-d^{2}\cdot s_{1}^{2}\right)^{2}} \; \mbox{ for } m>1.
\label{eq:sn_m>1}
\end{equation}
All in all, one concludes the function $m\mapsto|\sn(K(m)u \mid m)|$ extends continuously on the whole plane $\C$.
In addition to the continuity of $m\mapsto|\sn(K(m)u \mid m)|$, we will show that this function is also bounded.

\begin{prop}\label{prop:sn_lim_infty}
 For $u\in(0,1)$, it holds
  \[
  \lim_{\substack{m\to\infty \\ m\in\mathbb{C}\setminus[1,\infty)}}\sn(K(m)u\mid m)=0.
  \]
 In particular, function $m\mapsto\sn(K(m)u\mid m)$ is bounded on $\C\setminus[1,\infty)$.
\end{prop}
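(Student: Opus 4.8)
The plan is to reduce everything to the behaviour of the small parameter $\mu=m^{-1}$ by means of the reciprocal-modulus formula \eqref{eq:sn_outside_unit_circle}, which is valid for $m\in\C_{+}$. The point is that whenever $m\in\C_{+}$ one has $\mu\in\C_{-}$ and $\mu_{1}=1-\mu\in\C_{+}$, so both $\mu\to0$ and $\mu_{1}\to1$ take place inside the cut plane $\C\setminus[1,\infty)$ and Lemma~\ref{lem:asym} applies to all six quantities in \eqref{eq:sn_outside_unit_circle}. Concretely, as $m\to\infty$ in $\C_{+}$ we have $\mu\to0$, hence by \eqref{eq:sn_asympt_m_to_zero}--\eqref{eq:dn_asympt_m_to_zero} the quantities $s,c,d$ tend to $\sin(\pi u/2)$, $\cos(\pi u/2)$, $1$; while $\mu_{1}\to1$, so by \eqref{eq:sn_asympt_m_to_1}--\eqref{eq:dn_asympt_m_to_1} (with the role of $m_{1}$ played by $\mu$) we get $s_{1}\to1$ together with $c_{1}=2^{1-2u}\mu^{u/2}+o(\mu^{u/2})$ and $d_{1}=2^{1-2u}\mu^{u/2}+o(\mu^{u/2})$, both of order $|\mu|^{u/2}$.

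The heart of the matter is the size of the denominator $1-d^{2}s_{1}^{2}$ in \eqref{eq:sn_outside_unit_circle}. Using the elementary identities $d^{2}=1-\mu s^{2}$ and $s_{1}^{2}=1-c_{1}^{2}$ I would rewrite it as
\[
1-d^{2}s_{1}^{2}=c_{1}^{2}+\mu\, s^{2}s_{1}^{2},
\]
so that its two contributions are of orders $|\mu|^{u}$ and $|\mu|$. For $u\in(0,1)$ one has $|\mu|^{u}\gg|\mu|$ as $\mu\to0$, whence the first term dominates and, for $|\mu|$ small enough, $|1-d^{2}s_{1}^{2}|\geq\frac12|c_{1}|^{2}$ is bounded below by a constant times $|\mu|^{u}$. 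The numerator is controlled from above by the triangle inequality, $|s\,d_{1}+\ii\,c\,d\,s_{1}c_{1}|\leq|s||d_{1}|+|c||d||s_{1}||c_{1}|=O(|\mu|^{u/2})$. Substituting both bounds into \eqref{eq:sn_outside_unit_circle} yields
\[
|\sn(K(m)u\mid m)|=|\mu|^{1/2}\,\frac{|s\,d_{1}+\ii\,c\,d\,s_{1}c_{1}|}{|1-d^{2}s_{1}^{2}|}=O\!\left(|\mu|^{1/2}\,\frac{|\mu|^{u/2}}{|\mu|^{u}}\right)=O\!\left(|\mu|^{(1-u)/2}\right),
\]
which tends to $0$ since $(1-u)/2>0$. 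This settles the limit for $m\to\infty$ in $\C_{+}$.

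To cover the whole cut plane I would work with $|\sn(K(m)u\mid m)|$, which by the discussion preceding the proposition is continuous on all of $\C$. The case $m\to\infty$ in $\C_{-}$ follows from the upper half-plane case by the reflection symmetry \eqref{eq:sn_complex_conj}, and the points of large modulus lying on the real axis are then obtained from the bound $O(|\mu|^{(1-u)/2})$, which passes from $\C_{+}$ to its closure by continuity. Boundedness on $\C\setminus[1,\infty)$ is then immediate, since $|\sn(K(m)u\mid m)|$ is continuous on $\C$ and has limit $0$ at infinity.

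The main obstacle I anticipate is precisely the competition of the two powers in the denominator: both numerator and denominator vanish as $m\to\infty$, and it is only the strict inequality $u<1$ that makes $c_{1}^{2}\sim|\mu|^{u}$ outweigh the correction $\mu\, s^{2}s_{1}^{2}\sim|\mu|$, so that no cancellation occurs and the quotient decays rather than blows up. The other point demanding care is the half-plane bookkeeping ($\mu\in\C_{-}$, $\mu_{1}\in\C_{+}$), which is what guarantees that the two regimes of Lemma~\ref{lem:asym} are genuinely applicable.
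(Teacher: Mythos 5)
Your proposal is correct and follows essentially the same route as the paper: both reduce to $\mu=m^{-1}\to0$ via the reciprocal-modulus identity \eqref{eq:sn_outside_unit_circle}, feed in the two asymptotic regimes of Lemma~\ref{lem:asym}, and conclude a decay of order $|m|^{(u-1)/2}$, with $\C_-$ handled by \eqref{eq:sn_complex_conj}. The only difference is cosmetic: the paper computes the exact leading term $\ii e^{-\ii\pi u/2}2^{2u-1}m^{(u-1)/2}$, whereas you stop at an order bound (and in fact make the dominance of $c_1^2$ over $\mu s^2 s_1^2$ in the denominator more explicit than the paper does).
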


\begin{proof}
 We will prove that $\sn(K(m)u\mid m)\to0$ for $m\to\infty$ and $\Im m>0$; by the reflection relation \eqref{eq:sn_complex_conj}, one obtains the same result for $m\to\infty$ and $\Im m<0$. Then the statement follows by the analyticity of $m\mapsto\sn(K(m)u\mid m)$
 on $\C\setminus[1,\infty)$.

 Let $\Im m>0$. If $m\to\infty$, then $\mu=m^{-1}\to0$ and $\mu_{1}\to1$. By using formula \eqref{eq:sn_outside_unit_circle}
 and applying the asymptotic expansions \eqref{eq:sn_asympt_m_to_zero}--\eqref{eq:dn_asympt_m_to_zero}
 together with \eqref{eq:sn_asympt_m_to_1}--\eqref{eq:dn_asympt_m_to_1}, one obtains
 \[
  \sn(K(m)u\mid m)=\ii e^{-\ii\pi u/2}\,2^{2u-1}\,m^{(u-1)/2}\left(1+o(1)\right), \; \mbox{ for } m\to\infty.
 \]
 By taking into account the assumption $u\in(0,1)$, one verifies the statement.
\end{proof}

Finally, we investigate the region $m \in (1,\infty)$. To do this, an auxiliary result is needed.

\begin{lem}\label{lem:ineq}
 For any $u\in(0,1)$, the function
 \[
  \varphi(u,\mu):=\DC^{2}(K(\mu_1)u\mid\mu_1)-\DC^{2}(K(\mu)u\mid\mu)
 \]
 is strictly increasing in $\mu$ on $(0,1)$.
 In addition, if $u\in(0,1/2]$, then $\varphi(u,\mu)<1$ for all $\mu\in(0,1)$,
 while if $u\in(1/2,1)$, then there exists a unique $\tilde{\mu}\in(1/2,1)$ such that $\varphi(u,\tilde{\mu})=1$.
\end{lem}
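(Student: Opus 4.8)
The plan is to establish monotonicity in $\mu$ first and then read off the two boundary claims about the value $1$ from the behavior at the endpoints $\mu\to0$ and $\mu\to1$. Write $\varphi(u,\mu)=\psi(u,\mu_1)-\psi(u,\mu)$, where $\psi(u,\mu):=\DC^2(K(\mu)u\mid\mu)$ and $\mu_1=1-\mu$. Since the map $\mu\mapsto\mu_1$ is an orientation-reversing involution of $(0,1)$, strict monotonicity of $\varphi$ in $\mu$ would follow immediately if I could show that $\mu\mapsto\psi(u,\mu)$ is itself strictly monotone (decreasing), because then both $-\psi(u,\mu)$ and $\psi(u,\mu_1)=\psi(u,1-\mu)$ would be strictly increasing in $\mu$. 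So the crux reduces to proving that, for fixed $u\in(0,1)$, the single function $\mu\mapsto\DC^2(K(\mu)u\mid\mu)$ is strictly decreasing on $(0,1)$.

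For that single-function claim I would differentiate $\psi$ with respect to $\mu$. The dependence on $\mu$ is twofold: through the parameter $\mu$ in $\DC(\cdot\mid\mu)$ and through the argument $K(\mu)u$, which moves as $\mu$ varies because $K(\mu)$ does. I would therefore invoke the derivative of $\DC=\dn/\cn$ with respect to its first argument (expressible via the standard derivative formulas \cite[Eqs.~16.16.1--3]{abramowitz64}, giving $\partial_x\DC = (1-m)\,\SC\,\NC$), the derivative of $K$ with respect to $\mu$, and the derivative of the Jacobian functions with respect to the \emph{parameter} $\mu$ (the paper signals in Section \ref{sec:proof} that such parameter-derivatives are available tools). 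Collecting these, $\partial_\mu\psi$ becomes a combination of Jacobian functions evaluated at $(K(\mu)u\mid\mu)$ with explicit coefficients in $\mu$ and $u$; on $(0,1)$ all of $\sn,\cn,\dn$ are positive, so I expect the sign to be determinate after a modest amount of algebra.

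Alternatively, to avoid the parameter-differentiation entirely, I would try the asymptotic route as a consistency anchor and, if needed, a substitute: by the expansions \eqref{eq:dc_asympt_m_to_zero} one has $\DC^2(K(\mu)u\mid\mu)=2^{2-4u}\mu^{u}+o(\mu^{u})\to0$ as $\mu\to0$, and hence $\psi(u,\mu_1)\to\psi(u,1^-)$ while $\psi(u,\mu)\to0$; symmetrically at $\mu\to1$. This yields the limiting values $\varphi(u,0^+)=\lim_{\mu\to1}\psi(u,\mu)$ and $\varphi(u,1^-)=-\lim_{\mu\to1}\psi(u,\mu)$, which are negatives of each other, so $\varphi$ is an odd-type function about $\mu=1/2$ with $\varphi(u,1/2)=0$. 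Combined with strict monotonicity this already forces a single sign change, and I would compute $L(u):=\lim_{\mu\to1^-}\DC^2(K(\mu)u\mid\mu)=\varphi(u,0^+)$ explicitly. Using $\DC\to\sec(\pi u/2)$-type behavior as $\mu\to1^-$ (from the $q\to1$ limit, or equivalently from the $\mu_1\to0$ expansions applied after Jacobi's imaginary transformation as in the proof of Lemma \ref{lem:asym}), I expect $L(u)$ to be a clean elementary expression in $u$; the threshold $u=1/2$ should emerge precisely as the value where $L(u)$ crosses the level that makes the maximum of $\varphi$ on $(0,1)$, namely $\varphi(u,1^-)=L(u)$, equal to $1$.

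Finally I would close the value statements. Because $\varphi(u,\cdot)$ is strictly increasing with $\varphi(u,1^-)=L(u)$ as its supremum, the inequality $\varphi(u,\mu)<1$ for all $\mu\in(0,1)$ holds if and only if $L(u)\le1$, which I expect to be exactly the regime $u\in(0,1/2]$; and when $u\in(1/2,1)$ one has $L(u)>1>\varphi(u,1/2)=0$, so by strict monotonicity and the intermediate value theorem there is a unique $\tilde\mu\in(1/2,1)$ with $\varphi(u,\tilde\mu)=1$ (the location in $(1/2,1)$ rather than $(0,1/2)$ following because $\varphi>0$ only past the symmetry point $\mu=1/2$). The main obstacle I anticipate is the monotonicity step: controlling the sign of $\partial_\mu\psi$ cleanly, since parameter-derivatives of Jacobian functions are cumbersome; if the direct differentiation does not collapse to an obviously-signed expression, I would instead prove monotonicity of $\varphi$ by differentiating the \emph{difference} $\psi(u,\mu_1)-\psi(u,\mu)$ as a whole, where a cancellation between the two symmetric terms may produce a manifestly positive derivative, paralleling the telescoping trick used in Lemma \ref{lem:aux_lem}.
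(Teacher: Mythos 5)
Your architecture coincides with the paper's: write $\varphi(u,\mu)=\psi(\mu_1)-\psi(\mu)$ with $\psi(\mu)=\DC^{2}(K(\mu)u\mid\mu)$, show $\psi$ is strictly decreasing so that both contributions to $\partial_\mu\varphi$ are positive, compute the endpoint limits, and close with the intermediate value theorem plus $\varphi(u,1/2)=0$. But the crux --- strict monotonicity of $\psi$ --- is precisely the step you leave open, and your expectation that $\partial_\mu\psi$ reduces to ``a combination of Jacobian functions with explicit coefficients'' whose sign follows from positivity of $\sn,\cn,\dn$ is not borne out. Parameter-derivatives of the Jacobian functions are not closed under $\{\sn,\cn,\dn\}$: they necessarily involve the incomplete elliptic integral of the second kind, equivalently Jacobi's zeta function. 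The paper's proof rests on the closed formula (Byrd--Friedman, Eqs.~710.00 and 710.60)
\[
\frac{\dd}{\dd\mu}\,\DC(K(\mu)u\mid\mu)=-\frac{\sn(K(\mu)u\mid\mu)\,\zn(K(\mu)u\mid\mu)}{2\mu\,\cn^{2}(K(\mu)u\mid\mu)},
\]
and the decisive non-algebraic input is that $\zn(\cdot\mid\mu)>0$ on $(0,K)$. Without identifying that the total $\mu$-derivative collapses onto the zeta function and importing its positivity, the monotonicity claim remains unproven; and your fallback of differentiating the difference $\psi(\mu_1)-\psi(\mu)$ as a whole in the hope of a telescoping cancellation will not help, since the two resulting terms carry zeta factors at the two different moduli $\mu$ and $\mu_1$ and simply add --- each must be shown positive on its own.

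Separately, your endpoint computation is incorrect: \eqref{eq:dc_asympt_m_to_zero} is an expansion of $\DC(\tau K u\mid m)$, i.e.\ at the imaginary argument $\ii K'(m)u$, not at $K(m)u$, so it does not give $\DC^{2}(K(\mu)u\mid\mu)\to 0$ as $\mu\to0$. The correct limits, read off from \eqref{eq:cn_asympt_m_to_zero}--\eqref{eq:dn_asympt_m_to_zero} and \eqref{eq:cn_asympt_m_to_1}--\eqref{eq:dn_asympt_m_to_1}, are $\psi(0^{+})=\sec^{2}(\pi u/2)$ and $\psi(1^{-})=1$, whence $\lim_{\mu\to1^{-}}\varphi(u,\mu)=\tan^{2}(\pi u/2)$; this is the quantity whose comparison with $1$ produces the threshold $u=1/2$. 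As written, your limiting values (in particular $\varphi(u,1^{-})=-\lim_{\mu\to1}\psi\leq 0$) contradict strict monotonicity together with $\varphi(u,1/2)=0$, and you also attach the $\sec(\pi u/2)$ behavior to the wrong endpoint. The concluding IVT and uniqueness argument, and the localization $\tilde\mu>1/2$ via $\varphi(u,1/2)=0$, are fine once these two points are repaired.
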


\begin{proof}
We used the abbreviated notations  $s=\sn(K(\mu)u\mid \mu)$, $s_1=\sn(K(\mu_1)u\mid \mu_1)$, etc., and also $z=\zn\left(K(\mu)u\mid \mu\right)$ which is the Jacobi's zeta function, see \cite[Sec.~3.6]{lawden89}.

By using formulas \cite[Eqs.~710.00 and 710.60]{byrd71}, one obtains
\begin{equation}
\frac{\rm d}{\rm d \mu}\DC(K(\mu)u\mid\mu)=-\frac{s \cdot z}{2\mu c^{2}}.
\label{eq:dc_der_modul}
\end{equation}
Then \eqref{eq:dc_der_modul} leads to
\[
\frac{\partial}{\partial\mu}\varphi(u,\mu)=\frac{d_{1} \cdot s_{1} \cdot z_{1}}{\mu_{1}c_{1}^{3}}+\frac{d \cdot s \cdot z}{\mu c^{3}}>0
\]
for all $\mu\in(0,1)$ and $u\in(0,1)$ since the Jacobi's zeta function $\zn(u\mid \mu)$ has positive values
for $u\in(0,K)$, see \cite{carlson_siamjna83}. Consequently, the function $\varphi(u,\cdot)$ is strictly increasing on $(0,1)$ for any $u\in(0,1)$.

The asymptotic formulas \eqref{eq:sn_asympt_m_to_zero}--\eqref{eq:dn_asympt_m_to_1} yield
\[
\lim_{\mu\to1-}\varphi(u,\mu)=\tan^{2}\left(\pi u/2\right).
\]
Thus, for $u\in(0,1/2]$, $\varphi(u,\mu)<1$ for all $\mu\in(0,1)$. While, for $u\in(1/2,1)$, there exists a unique solution $\tilde{\mu}\in(0,1)$ of the equation $\varphi(u,\mu)=1$.
Since clearly $\varphi(u,1/2)=0$, $\varphi(u,\mu)\leq0$ for $\mu\leq1/2$ and hence $\tilde{\mu}>1/2$.
\end{proof}

\begin{prop}\label{prop:sn_mod_m>1}
The following statements hold true.
\begin{enumerate}[{\upshape i)}]
	\item For all $(u,m) \in(0,1/2] \times (1,\infty) \cup (0,1) \times [2,\infty)$, we have  
	$$|\sn(K(m)u \mid m)|<1.$$
	\item Let $u\in(1/2,1)$ and $\tilde{m}=\tilde{m}(u)\in(1,2)$ be the unique solution of the equation 
	\[
	\DC^{2}\left(K\left(1-m^{-1} \right) u\mid 1-m^{-1}\right)-\DC^{2}\left(K\left(m^{-1}\right) u\mid m^{-1}\right)=1.
	\]
	Then, for all $(u,m) \in (1/2,1) \times (1,\tilde{m})$, we have
	\[
	|\sn(K(m)u \mid m)|>1,
	\]
	while, for all $(u,m) \in (1/2,1) \times (\tilde{m},2)$, we have
	\[
	|\sn(K(m)u \mid m)|<1.
	\]

\end{enumerate}
\end{prop}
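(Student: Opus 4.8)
The starting point is the explicit formula \eqref{eq:sn_m>1} for $|\sn(K(m)u\mid m)|^{2}$ valid for $m>1$, written in terms of $\mu=m^{-1}\in(0,1)$, $\mu_{1}=1-\mu$, and the abbreviations $s=\sn(K(\mu)u\mid\mu)$, $s_{1}=\sn(K(\mu_{1})u\mid\mu_{1})$, etc. The plan is to reduce the entire proposition to the monotonicity and threshold statements already packaged in Lemma~\ref{lem:ineq}, by showing that the sign of $|\sn(K(m)u\mid m)|^{2}-1$ coincides with the sign of $\varphi(u,m^{-1})-1$. Throughout, $s,c,d,s_{1},c_{1},d_{1}$ are real and positive since $u\in(0,1)$ and the parameters $\mu,\mu_{1}$ lie in $(0,1)$.

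The key algebraic step is the identity
\[
\mu\left(s^{2}d_{1}^{2}+c^{2}d^{2}s_{1}^{2}c_{1}^{2}\right)-\left(1-d^{2}s_{1}^{2}\right)^{2}=\left(1-d^{2}s_{1}^{2}\right)\left(\mu c^{2}s_{1}^{2}-d^{2}c_{1}^{2}\right),
\]
which I would verify by inserting the elementary relations $c^{2}=1-s^{2}$, $c_{1}^{2}=1-s_{1}^{2}$, $d^{2}=1-\mu s^{2}$, $d_{1}^{2}=1-\mu_{1}s_{1}^{2}$, reducing it to a polynomial identity in the variables $\mu$, $s^{2}$, $s_{1}^{2}$. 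Dividing \eqref{eq:sn_m>1} by $(1-d^{2}s_{1}^{2})^{2}$, using this identity and the fact that $1-d^{2}s_{1}^{2}>0$ (since $d^{2},s_{1}^{2}\in(0,1)$), I obtain the clean formula
\[
|\sn(K(m)u\mid m)|^{2}-1=\frac{\mu c^{2}s_{1}^{2}-d^{2}c_{1}^{2}}{1-d^{2}s_{1}^{2}}.
\]
On the other hand, writing $\DC^{2}=d^{2}/c^{2}$ and using $d_{1}^{2}-c_{1}^{2}=\mu s_{1}^{2}$ gives
\[
\varphi(u,\mu)-1=\frac{d_{1}^{2}}{c_{1}^{2}}-\frac{d^{2}}{c^{2}}-1=\frac{\mu c^{2}s_{1}^{2}-d^{2}c_{1}^{2}}{c^{2}c_{1}^{2}}.
\]
Since the denominators $1-d^{2}s_{1}^{2}$ and $c^{2}c_{1}^{2}$ are both strictly positive, the two right-hand sides share the sign of their common numerator, so $|\sn(K(m)u\mid m)|^{2}-1$ and $\varphi(u,m^{-1})-1$ have the same sign.

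With this equivalence in hand, the three cases follow at once from Lemma~\ref{lem:ineq}. For $u\in(0,1/2]$ one has $\varphi(u,\mu)<1$ for every $\mu\in(0,1)$, hence $|\sn(K(m)u\mid m)|<1$ for all $m\in(1,\infty)$; for arbitrary $u\in(0,1)$ and $m\geq2$ one has $\mu=m^{-1}\leq1/2$, so by monotonicity $\varphi(u,\mu)\leq\varphi(u,1/2)=0<1$, again giving $|\sn(K(m)u\mid m)|<1$. This establishes~i). For $u\in(1/2,1)$, let $\tilde{\mu}\in(1/2,1)$ be the unique root of $\varphi(u,\cdot)=1$ from Lemma~\ref{lem:ineq} and set $\tilde{m}=\tilde{\mu}^{-1}\in(1,2)$; this is exactly the $\tilde{m}$ of the statement, and its uniqueness is inherited from the strict monotonicity of $\mu\mapsto\varphi(u,\mu)$ together with that of $m\mapsto m^{-1}$. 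As $m$ increases, $\mu=m^{-1}$ decreases, so $m\in(1,\tilde{m})$ corresponds to $\mu\in(\tilde{\mu},1)$ where $\varphi>1$, whence $|\sn(K(m)u\mid m)|>1$, while $m\in(\tilde{m},2)$ corresponds to $\mu\in(1/2,\tilde{\mu})$ where $\varphi<1$, whence $|\sn(K(m)u\mid m)|<1$. This establishes~ii).

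The main obstacle is the algebraic identity above: its verification is routine but lengthy, yet it is the crucial simplification that collapses the complicated expression \eqref{eq:sn_m>1} into a form governed directly by the already-analysed function $\varphi$. Once this identity is secured, no further analysis of elliptic functions is required, since all the needed monotonicity and threshold information has been isolated in Lemma~\ref{lem:ineq}.
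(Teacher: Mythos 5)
Your proposal is correct and follows essentially the same route as the paper: the paper likewise reduces \eqref{eq:sn_m>1} via the elementary relations $\mu s^{2}=1-d^{2}$, $c_{1}^{2}=1-s_{1}^{2}$, etc., to the identity $|\sn(K(m)u\mid m)|^{2}=1-\frac{c^{2}c_{1}^{2}}{1-d^{2}s_{1}^{2}}\left(1-\varphi(u,\mu)\right)$, which is exactly your sign equivalence, and then invokes Lemma~\ref{lem:ineq}. The only cosmetic difference is that you spell out the verification that $\varphi(u,\mu)-1$ has numerator $\mu c^{2}s_{1}^{2}-d^{2}c_{1}^{2}$ directly, where the paper cites a standard identity for $d_{1}^{2}-c_{1}^{2}$.
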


\begin{proof}
We use the abbreviated notations $s=\sn(K(\mu)u\mid \mu)$, $s_1=\sn(K(\mu_1)u\mid \mu_1)$, etc., as in the proof of Lemma \ref{lem:ineq}. Recall also that we denoted $\mu=m^{-1}$.

First, we rewrite identity \eqref{eq:sn_m>1}.
By writing $\mu s^{2}=1-d^{2}$, $d_{1}^{2}=1-\mu_{1}s_{1}^{2}$, $\mu c^{2}=d^{2}-\mu_{1}$, and finally 
$c_{1}^{2}=1-s_{1}^{2}$, one verifies that
\[
\mu\left(s^{2}d_{1}^{2}+ c^{2}d^{2}s_{1}^{2}c_{1}^{2}\right)=(1-d^{2} \cdot s_{1}^{2})\left(1-d^{2} \cdot s_{1}^{2}-d^{2}(1-s_{1}^{2})+(d^{2}-\mu_{1})s_{1}^{2}\right).
\]
Consequently, from identity \eqref{eq:sn_m>1}, one has
\[
  |\sn(K(m)u \mid m)|^{2}=1-\frac{d^{2}(1-s_{1}^{2})-(d^{2}-\mu_{1})s_{1}^{2}}{1-d^{2} \cdot s_{1}^{2}}
  =1-\frac{d^{2} \cdot c_{1}^{2}-\mu c^{2} \cdot s_{1}^{2}}{1-d^{2} \cdot s_{1}^{2}}.
\]
Next, by using \cite[Eq.~16.~9.~3]{abramowitz64}, the last identity can be further rewritten into the form
\begin{equation}
 |\sn(K(m)u \mid m)|^{2}=1-\frac{c^{2}\cdot c_{1}^{2}}{1-d^{2} \cdot s_{1}^{2}}\left(1-\varphi(u,\mu)\right)
\label{eq:sn_abs_simpl}
\end{equation}
where the notation of Lemma \ref{lem:ineq} is used.
Noticing that the factor
\[
 \frac{c^{2}\cdot c_{1}^{2}}{1-d^{2} \cdot s_{1}^{2}}>0
\]
for all $(m,\mu) \in (0,1)^2$, it follows from the formula \eqref{eq:sn_abs_simpl} that the statement can be obtained by inspection of the values
of $\varphi(u,\mu)$. This, however, is treated in Lemma \ref{lem:ineq}. 
Indeed, by Lemma \ref{lem:ineq}, for all $(u,\mu) \in (0,1/2] \times (0,1) \cup (0,1) \times (0,1/2]$, 
it holds that $\varphi(u,\mu)<1$, so \eqref{eq:sn_abs_simpl} then implies the validity of claim (i). 

On the other hand, if $u\in(1/2,1)$, according to Lemma \ref{lem:ineq}, there exists a unique $\tilde{\mu}\in(1/2,1)$ such that $\varphi(u,\mu)=1$. 
Hence by the monotonicity of $\varphi(u,\cdot)$, $\varphi(u,\mu)<1$ for $\mu<\tilde{\mu}$ and $\varphi(u,\mu)>1$ for $\mu>\tilde{\mu}$. This yields claim (ii) with $\tilde{m}=\tilde{\mu}^{-1}$.
\end{proof}

\begin{rem}
By applying \cite[Eq.~16.~9.~3]{abramowitz64}, identity \eqref{eq:sn_abs_simpl} can be further simplified to the form
 \[
  |\sn(K(m)u \mid m)|^{2}=1+\frac{\mu\SC^{2}(K(\mu_1) u \mid\mu_1)-\DC^{2}(K(\mu) u\mid\mu)}{1+\SC^{2}(K(\mu)u\mid\mu)\DC^{2}(K(\mu_1)u\mid\mu_1)}.
 \]
 This formula could be useful for more subtle investigation of properties of $|\sn(K(m)u \mid m)|$ for $m>1$. First of all, it follows from 
 claim (ii) of Proposition \ref{prop:sn_mod_m>1} that the function $m \mapsto |\sn(K(m)u \mid m)|$, with $u\in(1/2,1)$, has a maximum located in 
 $(1,\tilde{m})$. However, it is not clear whether this maximum is unique, although there is a strong numerical evidence for this statement.
\end{rem}

Now everything is prepared to prove the main theorem.

\subsection{Proof of Theorem \ref{thm:summary}}

Proof of assertion (i): Let $u\in(0,1)$. We know already that 
\begin{equation}\label{ineq:sn.1}
   |\sn(K(m)u\mid m)|<1,
\end{equation}
for all $ m\in\overline{\D}\setminus\{1\} \cup \partial\left(\D\cup \D_{1}\right)\cup(2,\infty)$, see Proposition \ref{prop:sn_mod_strict_ineq}, Lemma \ref{lem:sn_mod_strict_ineq_D1} and Proposition \ref{prop:sn_mod_m>1}, part~(ii).

Since $\partial \left(\D\cup \D_{1}\right)$ is a compact set
  and taking into account that Proposition \ref{prop:sn_lim_infty} together with the continuity of 
  $m\mapsto|\sn(K(m)u\mid m)|$ implies $|\sn(K(m)u\mid m)|\to0$ as $m\to\infty$, there exists a constant 
  $c<1$ such that, for all $m\in\partial\left(\D\cup \D_{1}\right)\cup(2,\infty)$,
  \begin{equation}\label{ineq:sn.c}
  |\sn(K(m)u\mid m)|\leq c.
  \end{equation}
  Finally, since $m\mapsto\sn(K(m)u\mid m)$ is analytic in the region $\C\setminus\left(\overline{\D\cup \D_{1}}\cup(2,\infty)\right)$,
  continuous to the boundary and bounded in this region, the maximum modules principle for unbounded 
  regions, see, for example, \cite[Thm.~15.1]{bak10}, implies that \eqref{ineq:sn.c} holds for all $m \in \C\setminus\left(\D\cup \D_{1}\right)$.
 
Proof of assertion (ii): Let $u\in(0,1/2]$. By \eqref{eq:sn_asympt_m_to_1}, $|\sn(K(1)u\mid 1)|=1$.
Further, according to Proposition \ref{prop:sn_mod_m>1}, part~(i), and 
Lemmas \ref{lem:sn_mod_|m|=1} and \ref{lem:sn_mod_strict_ineq_D1}, we know that \eqref{ineq:sn.1} holds
for all $m\in\partial\left((\D_{1}\setminus \D)\cap\C_{+}\right) \setminus\{1\}$.
Since $m \mapsto \sn(K(m)u\mid m)$ is analytic in $(\D_{1}\setminus \D)\cap\C_{+}$ and continuous to the boundary, the maximum modulus principle implies that, for all $m\in\overline{(\D_{1}\setminus \D)\cap\C_{+}}$,
\[
  |\sn(K(m)u\mid m)| \leq 1.
\]
However, the last inequality is strict, which can be verified by a standard procedure based on the averaging property of analytic functions, i.e., a function $f$ analytic in a region $G\subset\C$ satisfies
\[
 f(z_{0})=\frac{1}{2\pi}\int_{0}^{2\pi}f\left(z_{0}+re^{\ii\theta}\right)\dd\theta
\]
for any $z_{0}$ and $r>0$ such that $\{z\in\C\mid |z-z_{0}|\leq r\}\subset G$. 
The rest now follows from \eqref{eq:sn_complex_conj} and the already proved assertion (i).

Proof of assertion (iii): Let $u\in(1/2,1)$. By Proposition \ref{prop:sn_mod_m>1} part~(iii), we know
 that there is a global maximum of the function $m\mapsto|\sn(K(m)u\mid m)|$ restricted to interval $[1,2]$ located at some
 point $m^{*}\in(1,2)$ and $|\sn(K(m^{*})u\mid m^{*})|>1$. Since for all $m\in\left(\partial(\D_{1}\setminus \D)\right)
 \cap\C_{+}$ we already known that $|\sn(K(m)u\mid m)|<1$. Thus, by the maximum modulus principle, $m^{*}$ is the global
 maximum of $m\mapsto|\sn(K(m)u\mid m)|$ restricted to $\overline{(\D_{1}\setminus \D)\cap\C_{+}}$. Now it suffices to take into
 account \eqref{eq:sn_complex_conj} and the already proved assertion (i).
\qed

\section{Numerical computations and concluding remarks}
\label{sec:num}

Theorem \ref{thm:summary} shows that the extremal properties of $m \mapsto |\sn(K(m)u\mid m)|$ are interesting particularly in the region $\D_{1}\setminus \D$ and the interval $[1,2]$; recall we assume $u\in(0,1)$. 
To simplify notations, we denote
\begin{equation}
\sigma(u,m):=|\sn(K(m)u\mid m)|.
\end{equation}
For different values of $u \in (0,1)$, the region in $\C$ where $\sigma(u,\cdot) \geq 1$ is plotted in Figure \ref{fig:regions}. As numerics suggests, there is exactly one maximum of $\sigma(u,\cdot)$ at $m^*(u)\in[1,2]$ 
and its position is indicated in Figure \ref{fig:regions} as well. 
\begin{figure}[htb!]
	\includegraphics[width= 0.31 \textwidth]{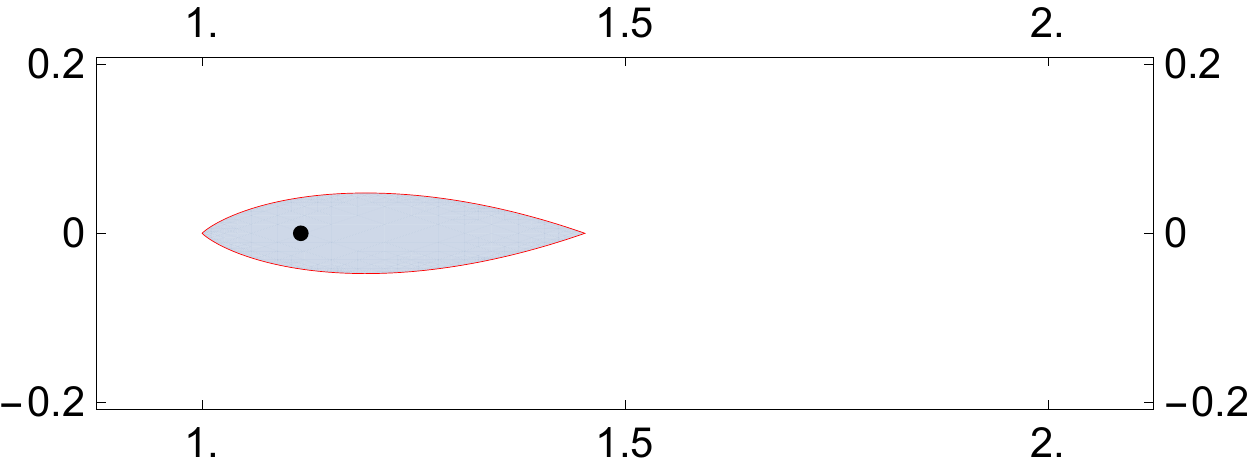} \quad 
	\includegraphics[width= 0.31 \textwidth]{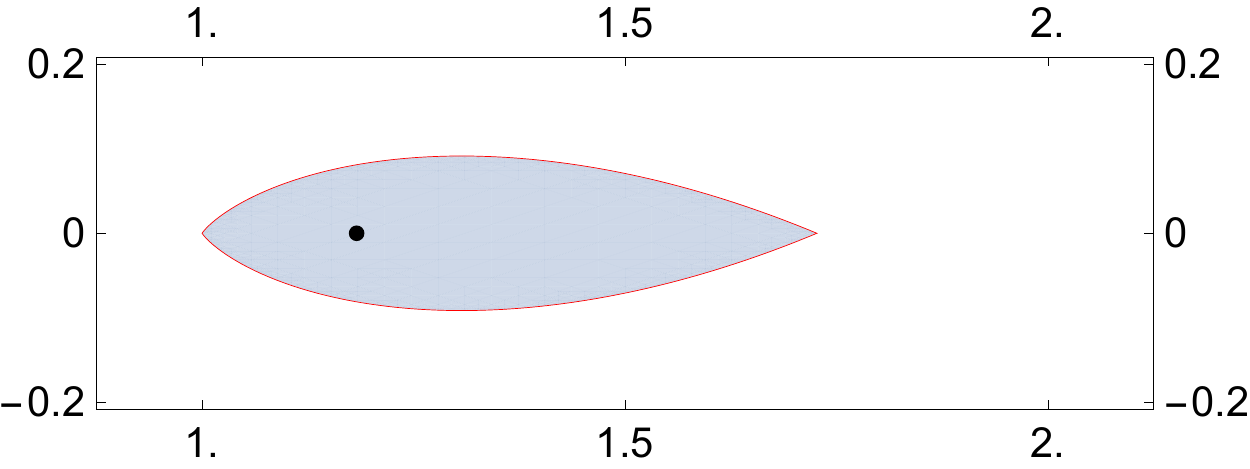} \quad
	\includegraphics[width= 0.31 \textwidth]{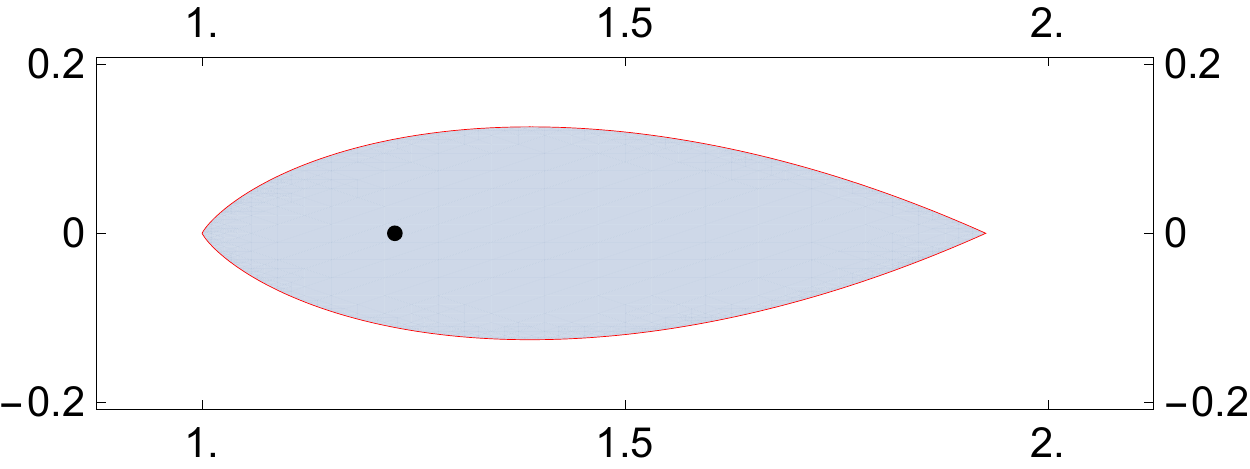}
	\caption{The region in $\C$ where $\sigma(u,\cdot) \geq 1$ for $u=0.7$, $u=0.8$ and $u=0.9$. Black ball shows the position $m^*$ of the maximum of $\sigma(u,\cdot)$.}
	\label{fig:regions}
\end{figure}
Moreover, the position $m^*(u)$ of the maximum and the maximal value of $\sigma(u,\cdot)$ are plotted in Figure \ref{fig:max}. The maximal values exceed $1$ only very little; the value of the global maximum of $\sigma(u,m)$ 
on $[0,1] \times \C$ reads approx.~$1.01038$ and it is located at approx.~$(u,m)=(0.69098,1.11015)$.
\begin{figure}[htb!]
	\includegraphics[width= 0.4\textwidth]{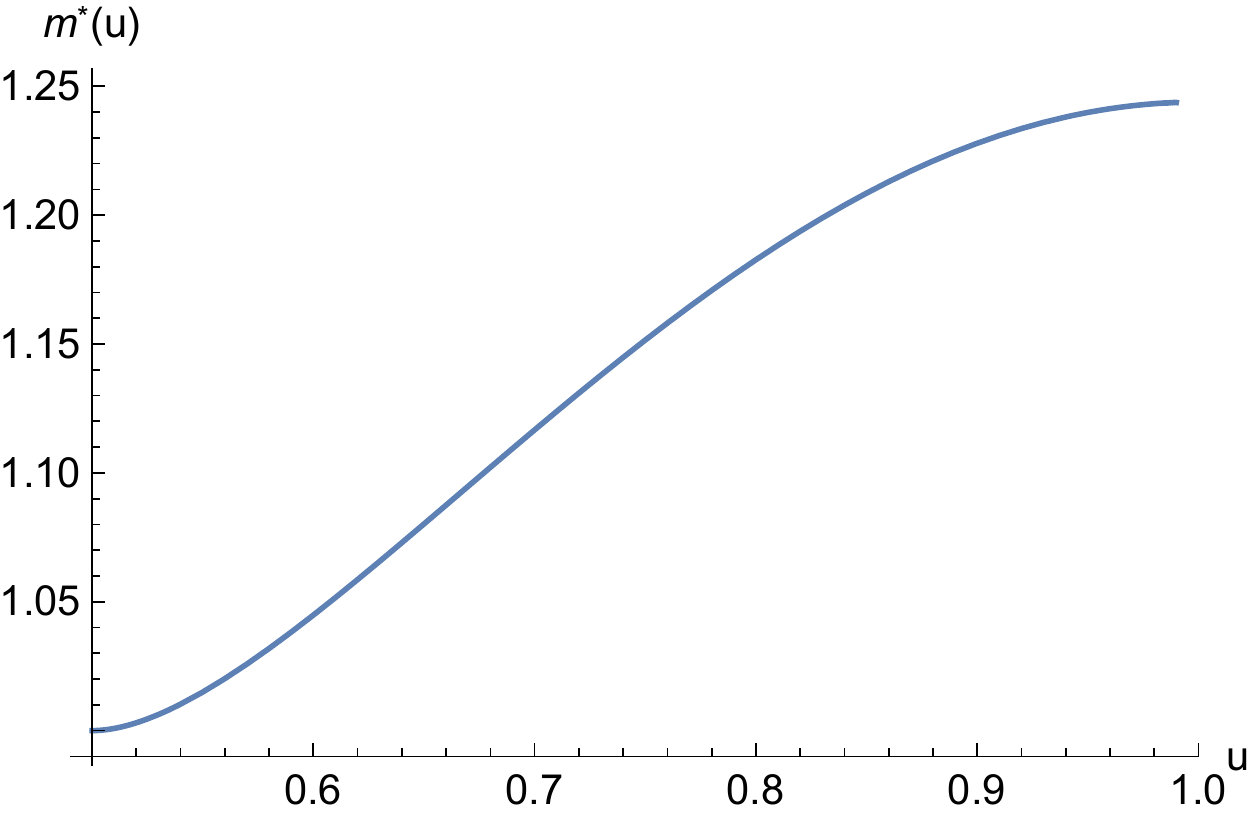} \qquad
		\includegraphics[width= 0.4\textwidth]{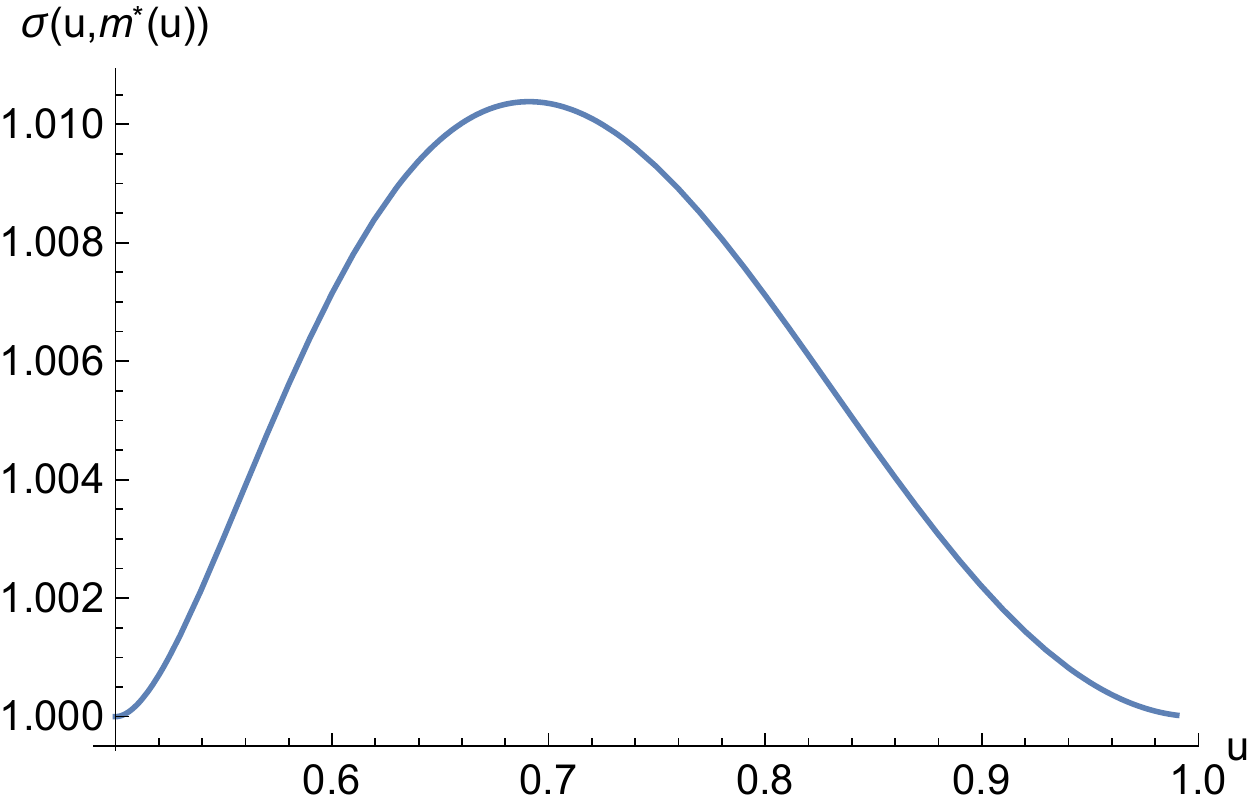}
	\caption{The position $m^*(u)$ of the maximum of $\sigma(u,m)$ (left) and the maximal value $\sigma(u,m^*(u))$ (right).}
	\label{fig:max}
\end{figure}
Finally, it even seems that $\sigma(u,\cdot)$ is convex in $(1,2)$ for $u\in(0,1/2]$ and concave for $u\in(1/2,1)$, see Figure \ref{fig:conv}.
\begin{figure}[htb!]
	\includegraphics[width= 0.6\textwidth]{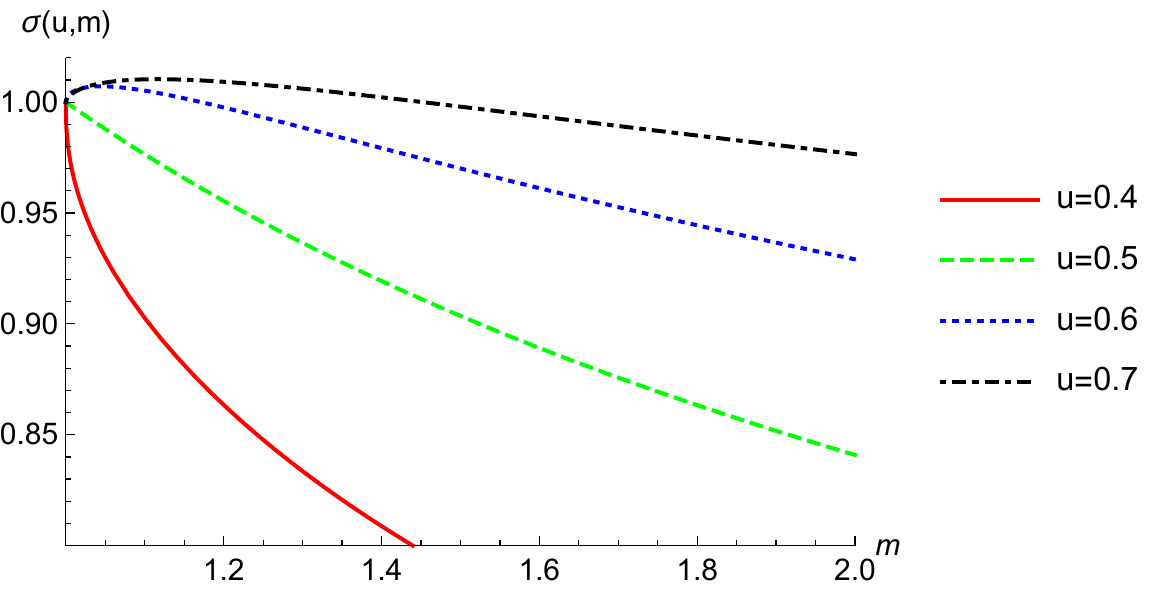}
	\caption{The plots of $\sigma(u,\cdot)$ for $u=0.4$ (red, solid), $u=0.5$ (green, dashed), $u=0.6$ (blue, dotted) and $u=0.7$ (black, dot-dashed).}
	\label{fig:conv}
\end{figure}

\section{Applications in Laplace-type integral and spectra of Jacobi matrices}
\label{sec:app}

Although the properties of the function $m\mapsto\sn(K(m)u\mid m)$ are of independent interest, 
our study was motivated by the spectral analysis of a one-parameter family of operators in $\ell^{2}(\N)$ associated with semi-infinite, in general non-symmetric, Jacobi matrices:
\begin{equation}
	J(k)=\begin{pmatrix}
		0 & 1\\
		1 & 0 & 2k\\
		& 2k & 0 & 3\\
		& & 3 & 0 & 4k\\
		& & & \ddots & \ddots & \ddots
	\end{pmatrix}, \quad k\in \overline \D,
	\label{eq:def_jacobi_mat}
\end{equation}
see~\cite{sieglstampach_inprep}.
The case $k \notin \overline \D$ can be converted to a similar problem by considering $k^{-1} J(k)$. The operator $J(k)$ turns out to be strongly linked with Jacobian elliptic functions, in particular, 
the parameter $k$ indeed coincides with the modulus, i.e. $m = k^2$. Notice that with our restriction on values of $k$, we have $m\in \overline \D$, thus we advantageously avoid the region where it is 
possible that $|\sn(K(m) u\mid m)|>1$.

While, for $k \in \D$, the spectrum of $J(k)$ is the discrete set $(2\mathbb{Z}+1)\pi/(2K(m))$, a sudden change of spectral character, sometimes called spectral phase transition, is observed when $k$ reaches 
$\partial \D$. Namely, for $k$ in the latter set, the spectrum of $J(k)$ is purely essential and, if in addition $k\neq\pm 1$, the spectrum coincides with the whole $\C$. 
The special self-adjoint and explicitly diagonalizable case $J(\pm1)$ is omitted here, see \cite{sieglstampach_inprep} for details.

The essential ingredient for the spectral results above is the analysis of functions
\begin{equation}\label{eq:CD_k_def} 
\begin{aligned}
C_{l}(z;m)&:=\int_{0}^{2K(m)}e^{-zt}\cn(t\mid m)\sn^{l}(t\mid m)\dd t,
\\
D_{l}(z;m)&:=\int_{0}^{2K(m)}e^{-zt}\dn(t\mid m)\sn^{l}(t\mid m)\dd t,
\end{aligned}
\end{equation}
where $l\in\mathbb{N}_{0}$, $z\in\C$, $m\in \overline{\D}\setminus\{1\}$, and the integration is carried out through the line segment in $\C$ connecting $0$ and $2K(m)$. The reason is that the sequence $v=v(z;m)$ 
with entries 
\begin{equation}\label{eq:def_u_k}
v_{2l+1}:=\ii(-1)^{l}m^{l/2}e^{\ii K(m)z}C_{2l}\left(\ii z;m\right),
\quad
v_{2l+2}:=(-1)^{l+1}m^{l/2}e^{\ii K(m)z}D_{2l+1}\left(\ii z;m\right),
\end{equation}
satisfies an infinite system of difference equations, compactly written as
\begin{equation}
(J(k)-z)v=-2\cos(K(m) z)e_1,
\label{eq:Ju}
\end{equation}
where $e_{1}$ denotes the first vector of the standard basis of $\ell^{2}(\N)$. 

It follows from \eqref{eq:Ju} that $z\in(2\mathbb{Z}+1)\pi/(2K(m))$ are eigenvalues of $J(k)$ if the corresponding $v(z;m)$ belongs to $\ell^{2}(\N)$. However, the latter condition is satisfied if $k \in \D$ due to the factor $m^{l/2}$ in \eqref{eq:def_u_k} and the fact that the integrals in \eqref{eq:CD_k_def} are majorized by a constant independent on $l$. Indeed, by Theorem \ref{thm:summary}, $|\sn(u\mid m))|\leq 1$ for all $m = k^{2}\in \D$ and $u\in(0,2K(m))$.

The case $k \in \partial \D \setminus \{\pm 1\}$ is more delicate. One works with singular sequences instead of eigenvectors and it is crucial to analyze the asymptotic behavior of $v_{n}$, for $n\to\infty$, leading to asymptotic expansion of integrals
\begin{equation}
	I_{l}(f)=\int_{0}^{2K(m)}f(t)\sn^{l}(t\mid m)\dd t, \quad l \to \infty,
	\label{eq:def_I_k}
\end{equation}
where $f$ is a function analytic on a neighborhood of $[0,2K(m)]$. Note that, for special choices of $f$, \eqref{eq:def_I_k} becomes \eqref{eq:CD_k_def}. The integral \eqref{eq:def_I_k} is in the form
suitable for application of the saddle point method, see, e.g., \cite[p.~417,\ Thm.~1.1]{fedoryuk87} and
\cite[Sec.~II.4]{wong01} for details. Nevertheless, we need to know maximal values of $|\sn(\cdot\mid m)|$ on $(0,2K(m))$. Thus, Theorem \ref{thm:summary} enters again, now for $m\in\partial \D\setminus\{1\}$, and we deduce that the function 
$|\sn(\cdot\mid m)|$ restricted to $(0,2K(m))$ has a unique global maximum located at $u=K(m)$ and $\sn(K(m)\mid m)=1$. 
Then the straightforward application of the saddle point method yields that, for all $m\in \overline{\D}\setminus\{1\}$, 
\[
I_{l}(f)=\frac{\sqrt{2\pi}}{2^{r}r!}\frac{f^{(2r)}(K(m))}{\left(1-m\right)^{r+1/2}}l^{-r-1/2}+O(l^{-r-1}), \quad  l\to\infty,
\]
where $r\in\N_{0}$ is the lowest integer such that $f^{(2r)}(K(m))\neq0$. In particular, one gets
\begin{align*}
C_{l}(z;m)=\frac{\sqrt{2\pi}\, z}{(1-m)e^{K(m)z}} \frac{1}{l^{3/2}}+O(l^{-2}), \quad
\ \
D_{l}(z,m)=\frac{\sqrt{2\pi}}{e^{K(m) z}}\,\frac{1}{l^{1/2}}+O(l^{-1}), \quad l\to\infty.
\end{align*}
For more details and connected spectral results on $J(k)$, see \cite{sieglstampach_inprep}.

\section*{Acknowledgements}
The research of P.~S. is supported by the \emph{Swiss National Foundation}, SNF Ambizione grant No.\ PZ00P2\_154786.
F.~{\v S}. gratefully acknowledges the kind hospitality of the Mathematisches Institut, 
Universit{\"a}t Bern; his research was also supported by grant No. GA13-11058S of the Czech Science Foundation.

\bibliographystyle{acm}
\bibliography{references}

\end{document}